\newcommand{\R}{\mathbb{R}}
\newcommand{\Z}{\mathbb{Z}}
\newcommand{\lam}{\lambda}
\newcommand{\E}{\mathcal{E}}
\newcommand{\suppm}{\mathrm{supp}\thinspace}
\newcommand{\tsquare}{\tilde{\square}}
\newcommand{\etabr}{\eta_{B_{R^2}}}
\newcommand{\F}{\mathcal{F}}
\theoremstyle{plain}
\newtheorem{thm}{Theorem}
\newtheorem*{thm*}{Theorem}
\newtheorem{prop}{Proposition}
\newtheorem{lem}{Lemma}
\newtheorem{rem}{Remark}
\theoremstyle{definition}
\newtheorem{dfn}{Definition}
\title{Local smoothing estimates for Schrödinger equations in modulation spaces}
\author{Kotaro Inami}
\date{}
\begin{document}

\maketitle
\begin{abstract}
    Motivated by a recent work of Schippa (2022), we consider local smoothing estimates for Schr\"{o}dinger equations in modulation spaces. By using the C\'{o}rdoba-Fefferman type reverse square function inequality and the bilinear Strichartz estimate, we can refine the summability exponent of modulation spaces. Next, we also discuss a new type of randomized Strichartz estimate in modulation spaces. Finally, we show that the reverse square function estimate implies the Strichartz estimates in modulation spaces. From this implication, we obtain the reverse square function estimate of critical order. 
\end{abstract}

\section{Introduction}
In this note, we consider the local smoothing property for the solution to the Schr\"{o}dinger equation
\begin{equation*}
    \begin{cases}
        i\frac{\partial u}{\partial t} + \Delta u = 0\\
        u(x,0) = f(x). 
    \end{cases}
\end{equation*}
For $f\in \mathcal{S}(\R^d)$, we write the solution to the above equation as
\begin{equation*}
    e^{it\Delta}f(x) := \frac{1}{(2\pi)^d}\int_{\R^d}e^{i(x\cdot \xi + t|\xi|^{2})}\widehat{f}(\xi)d\xi. 
\end{equation*}
The fixed time estimate for $e^{it\Delta}$ on $L^p$-based Sobolev spaces by Miyachi \cite{akihiko1981some} is as follows:
\begin{equation*}
    \|e^{it\Delta}f\|_{L^{p}_{x}(\R^d)} \leq c(1 + |t|)^{s}\|f\|_{W^{s,p}(\R^d)}\quad \forall s \geq d\left|\frac{1}{2} - \frac{1}{p}\right|,\thinspace \forall p\in (1,\infty), \forall t\in\R. 
\end{equation*}
In contrast, Rogers \cite{rogers2008local} showed the following local smoothing estimate:
\begin{equation*}
    \|e^{it\Delta}f\|_{L^{p}_{t,x}(I\times\R^{d})}\lesssim\|f\|_{W^{s,p}(\R^{d})}\quad \forall p> 2 + \frac{4}{d + 1}, \thinspace\forall s>2d\left(\frac{1}{2} - \frac{1}{p}\right) -\frac{2}{p}. 
\end{equation*}
where $I := [-1,1]$. Note that for this estimate, there is a $\frac{2}{p}$ derivative gain compared to the fixed-time estimate. After Rogers \cite{rogers2008local}, there is some progress on the local smoothing estimate for fractional Schr\"{o}dinger equations (for example, see \cite{gao2022improved, gao2022type, gan2022note}). 

In this note, we consider $L^{p}$ local smoothing estimates for the free Schrödinger propagator in modulation spaces: 
\begin{equation}
    \|e^{it\Delta}f\|_{L_{t}^{p}L_{x}^{q}(I \times \R^{d})} \lesssim \|f\|_{M^{s}_{r,t}(\R^d)}
    \label{eq:lpsmoothing}
\end{equation}
This type of estimate was first discussed by Schippa \cite{schippa2022smoothing}. He discussed this kind of inequality to investigate the well-posedness of nonlinear Schr\"{o}dinger equations with slowly decaying initial data. His result is as follows. 
\begin{thm}[Theorem 1.1 in {\cite{schippa2022smoothing}}]
    Suppose that $d \geq 1$, $2\leq p \leq \infty$, and $1 \leq q \leq \infty$. 
    Then the estimate 
    \begin{equation}
        \|e^{it\Delta}f\|_{L^{p}_{t,x}(I\times \R^d)} \lesssim \|f\|_{M^{s}_{p,q}(\R^d)} \label{eq:localsmoothing_schippa}
    \end{equation}
    holds if either one of the following conditions holds: 
    \begin{enumerate}
        \renewcommand{\theenumi}{\alph{enumi}}
        \renewcommand{\labelenumi}{(\theenumi)}
        
        \item If $2\leq p \leq \frac{2(d + 2)}{d}$, then, (\ref{eq:localsmoothing_schippa}) holds provided that $s > \mathrm{max}\{0, \frac{d}{2} - \frac{d}{q}\}$.

        \item If $\frac{2(d + 2)}{d}\leq p\leq \infty$ and $2\leq q\leq\infty$ , then, (\ref{eq:localsmoothing_schippa}) holds provided that $s > d - \frac{d + 2}{p} - \frac{d}{q}$.

        \item If $\frac{2(d + 2)}{d}\leq p\leq \infty$ and $1\leq q \leq\infty$, then (\ref{eq:localsmoothing_schippa}) is valid, provided that $s > 2(1 - \frac{1}{q})(\frac{d}{s} - \frac{d + 2}{p})$. 

        \item If $q = 1$, then (\ref{eq:localsmoothing_schippa}) holds with $s = 0$.

        \item If $d = 1$, $p = 4$, and $q = 2$, then (\ref{eq:localsmoothing_schippa}) holds with $s = 0$. 
    \end{enumerate}
    \label{thm:localsmpthingschippa}
\end{thm}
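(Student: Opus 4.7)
My plan is to reduce the estimate on each modulation block to a unit-frequency estimate via Galilean invariance, and then to sum the blocks according to the value of $q$. The argument consists of three steps: (i) block reduction, (ii) a block-wise Schr\"odinger estimate (Strichartz in the subcritical regime, Rogers' local smoothing in the supercritical regime), and (iii) summation via orthogonality or a reverse square-function inequality.

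For step (i), decompose $f=\sum_{n\in\Z^d}\square_n f$ with $\square_n$ a smooth unit-cube frequency projection centered at $n$. Expanding $|\xi|^2=|n|^2+2n\cdot(\xi-n)+|\xi-n|^2$ and demodulating $g_n:=e^{-in\cdot x}\square_n f$ yields the identity
\[
|e^{it\Delta}\square_n f(x)|=|(e^{it\Delta}g_n)(x+2tn)|,
\]
in which $g_n$ is frequency-localized in the unit cube at the origin. Since the $L^{p}_{t}L^{q}_{x}$ norm is translation invariant in $x$, this reduces the block-wise bound to a unit-frequency estimate on $I\times\R^d$. Bernstein's inequality then makes all $L^{q}_{x}$-norms of $g_n$ comparable, and the convolution kernel of $e^{it\Delta}$ restricted to unit frequency is uniformly in $L^{1}_{x}$ for $t\in I$.

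For step (ii), the subcritical case (a) uses the Strichartz estimate at the admissible pair $(p,q_0)$ with $q_0=\frac{2dp}{dp-4}$, transferred to general $q$ by Bernstein; the threshold $s>\max\{0,\frac{d}{2}-\frac{d}{q}\}$ arises from a Littlewood-Paley loss of $2^{k(d/2-d/q)}$ at dyadic scale $R=2^k$, with no loss needed when $q\le 2$. The supercritical cases (b) and (c) feed in Rogers' local smoothing estimate \cite{rogers2008local}, which supplies the $\frac{2}{p}$ derivative gain over the Miyachi fixed-time bound \cite{akihiko1981some}; extracting the $\langle n\rangle$ weight per block reproduces the thresholds $s>d-\frac{d+2}{p}-\frac{d}{q}$ and $s>2(1-\frac{1}{q})\bigl(\frac{d}{2}-\frac{d+2}{p}\bigr)$. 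The endpoints are direct: (d) is the triangle inequality in $L^{1}_{x}$ applied block-wise against the uniform $L^{1}_{x}$ kernel bound, while (e) follows from $\|e^{it\Delta}f\|_{L^{2}_{x}}=\|f\|_{L^2}=\|f\|_{M^{0}_{2,2}}$ and integration in $t$ over the compact interval $I$.

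The main obstacle is step (iii): summing the blocks to the modulation-space norm requires the correct orthogonality in $L^{q}_{x}$. For $q=2$ this is Plancherel and for $q=1$ the triangle inequality, but for $2<q<\infty$ one needs a reverse square-function / Plancherel-P\'olya inequality adapted to unit frequency cubes; this determines the achievable summability exponent in the modulation space, and it is precisely this exponent that the subsequent sections of the present paper refine via the C\'ordoba-Fefferman inequality and bilinear Strichartz. For $1<q<2$ one interpolates between (d) and the $q=2$ case, and one has to check that this interpolation preserves the claimed threshold for $s$.
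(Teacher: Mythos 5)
This theorem is not proved in the paper: it is Schippa's Theorem 1.1, cited verbatim, and the paper only records it as background before proceeding to its own (different) results. So there is no ``paper's own proof'' to compare against; I can only assess your sketch on its merits, together with the one clue the paper gives about how Schippa actually argued.

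That clue is decisive and it exposes the gap in your plan. The paper states explicitly that the essential case of Schippa's theorem, namely
\[
\|e^{it\Delta}f\|_{L^{p_d}_{t,x}(I\times\R^d)} \lesssim \|f\|_{M^{\varepsilon}_{p_d,2}(\R^d)},\qquad p_d=\tfrac{2(d+2)}{d},
\]
``was shown via Bourgain--Demeter's $\ell^2$-decoupling estimate.'' Your proposal never invokes decoupling. In its place you offer, in step (iii), ``a reverse square-function / Plancherel--P\'olya inequality adapted to unit frequency cubes,'' which is a genuinely different tool and, at the critical exponent $p_d$, a strictly weaker one in every dimension $d\geq 2$: as Proposition~\ref{prop:demeter} in the paper records, the best known reverse square function estimate at $p=p_d$ carries the polynomial loss $R^{\frac{d}{2}-\frac{d+1}{p}+\varepsilon}=R^{\frac{d}{2(d+2)}+\varepsilon}$, and Proposition~\ref{prop:necessary for Demeter} shows this exponent is unimprovable. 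Feeding that into your block summation cannot reproduce the $\varepsilon$-loss needed for case (a) at $p=p_d$; it would instead give a positive power of the frequency. Decoupling avoids this precisely because it yields the $\ell^2(L^{p_d})$ norm directly, which after your Galilean block reduction lands on the modulation norm $M^{\varepsilon}_{p_d,2}$ with no square-function intermediary. You flag step (iii) yourself as ``the main obstacle'' and leave it unresolved; for $d\geq 2$ it is not merely an obstacle but the place where the proposed route fails.

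Two smaller points. First, the target on the right of (\ref{eq:lpsmoothing}) is $\|f\|_{M^s_{r,t}}$ with indices $r,t$ that your outline never pins down, so it is not clear which modulation space each of your block-summation schemes is actually aiming for; in Schippa's statement these are specified and matter (case (a), for instance, lands in $M^{\varepsilon}_{p_d,2}$, not $M^{\varepsilon}_{2,2}$). Second, your proof of (e) via $\|e^{it\Delta}f\|_{L^2_x}=\|f\|_{L^2}$ and $L^4_t$ over a compact interval uses neither $d=1$ nor $p=4$, so it cannot be the content of that item: (e) asserts a stronger statement (in $d=1$ it comes from the endpoint $L^4_{t,x}$ restriction/extension bound for the parabola, giving a larger modulation space than $M_{2,2}$), and the conservation-law argument only recovers a trivial weakening of it. The block-reduction idea in step (i) is sound and is indeed the standard way modulation-space Strichartz estimates are set up, but as written the sketch does not close.
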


He also achieved the necessary conditions for (\ref{eq:lpsmoothing}). 
\begin{prop}[{Schippa \cite{schippa2022smoothing}}]
    Assume that (\ref{eq:lpsmoothing}) holds. Then it follows that 
    \begin{align}
    \begin{cases}
        d - \frac{d}{q} - \frac{2}{p} \leq \frac{d}{r} + s \\
        s \geq 0\\
        q\geq r
    \end{cases}
    \end{align}
    \label{prop:schippanecessary}
\end{prop}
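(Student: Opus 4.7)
My plan is to prove each of the three necessary conditions by testing (\ref{eq:lpsmoothing}) against an appropriate family of test functions and letting a scaling parameter diverge.

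For $s \geq 0$, I would use a unit-scale Schwartz bump modulated to a high frequency, $f(x) = \phi(x)e^{ix\cdot\xi_0}$ with $|\xi_0| = N$. Since $\widehat f$ is concentrated in a single unit cube about $\xi_0$, the modulation norm satisfies $\|f\|_{M^s_{r,t}} \sim \langle N\rangle^s$, while the Galilean identity $|e^{it\Delta}f(x)| = |(e^{it\Delta}\phi)(x + 2t\xi_0)|$ implies that $\|e^{it\Delta}f\|_{L^p_t L^q_x(I\times\R^d)}$ is bounded uniformly in $N$. Sending $N\to\infty$ then forces $s\geq 0$.

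For $q\geq r$, I would use a superposition $f(x) = \sum_{j=1}^M \phi(x - y_j)e^{ix\cdot\xi_j}$ with $M$ distinct lattice frequencies $\{\xi_j\}$ at height $\sim N$ and with positions $y_j$ chosen so that the propagated centres $y_j + 2t\xi_j$ remain mutually well-separated for every $t\in I$. Each $\square_k$ picks out a single summand, giving $\|f\|_{M^s_{r,t}}\sim M^{1/r}\langle N\rangle^s$, whereas the disjointness of the propagated packets yields $\|e^{it\Delta}f\|_{L^p_t L^q_x}\sim M^{1/q}$. Letting $M\to\infty$ forces $1/q\leq 1/r$.

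For the scaling inequality $d - d/q - 2/p\leq d/r + s$, I would test with $f(x) = N^d\phi(Nx)$, so that $\widehat f(\xi) = \widehat\phi(\xi/N)$ essentially equals the indicator of $B(0, N)$. Decomposing over unit cubes, each $\square_k f$ with $|k|\lesssim N$ is a unit-scale modulated bump of uniformly bounded $L^t$-norm; summing over the $\sim N^d$ relevant lattice points yields $\|f\|_{M^s_{r,t}}\sim N^{s + d/r}$. On the other hand, completing the square in the phase $x\cdot\xi + t|\xi|^2$ and applying the Fresnel-type stationary-phase formula (the critical point being $\xi_c = -x/(2t)$) gives $|e^{it\Delta}f(x)|\sim|t|^{-d/2}$ on the dispersive region $\{|x|\leq 2|t|N\}$ when $|t|\gtrsim N^{-2}$, while $|e^{it\Delta}f|\approx|f|$ on the short-time regime $|t|\lesssim N^{-2}$. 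Both regimes contribute at the same order $N^{dp/q' - 2}$ to $\|e^{it\Delta}f\|_{L^p_t L^q_x}^p$, giving $\|e^{it\Delta}f\|_{L^p_t L^q_x}\sim N^{d - d/q - 2/p}$, and comparison with the modulation norm yields the desired inequality.

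The principal obstacle is the third condition: the stationary-phase analysis needs some care near the boundary $|\xi|\sim N$ of the frequency support, and one must verify that the dispersive ($|t|\gtrsim N^{-2}$) and non-dispersive ($|t|\lesssim N^{-2}$) time regimes combine to give the claimed scaling. The first two conditions are routine consequences of Galilean and translational invariance, respectively.
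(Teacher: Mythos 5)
The paper itself does not prove this proposition; it is quoted from Schippa \cite{schippa2022smoothing}, so there is no in-text argument to compare against, and I assess your proposal on its own. Your test for $s\geq 0$ (a high-frequency modulated bump together with Galilean invariance) is correct and standard. Your other two tests, however, conflate the inner Lebesgue exponent $r$ with the outer summability exponent $t$ in $\|f\|_{M^s_{r,t}} = \|\langle k\rangle^s\|\square_k f\|_{L^r_x}\|_{\ell^t_k}$. In the test for $q\geq r$: because the $\xi_j$ are distinct lattice points, each $\square_k$ isolates a single summand, so $\|\square_k f\|_{L^r}\sim 1$ for exactly $M$ values of $k$ and hence $\|f\|_{M^s_{r,t}}\sim\langle N\rangle^s M^{1/t}$, \emph{not} $M^{1/r}$. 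The resulting comparison $M^{1/q}\lesssim\langle N\rangle^s M^{1/t}$ involves $t$, not $r$, and $q\geq r$ does not follow. To isolate $r$ you should instead take a low-frequency dilate $f_\delta(x)=\phi(\delta x)$ with $\widehat\phi$ supported in a single unit cube and $\delta\to 0^+$: then $\|f_\delta\|_{M^s_{r,t}}\sim\|f_\delta\|_{L^r}\sim\delta^{-d/r}$, while $e^{it\Delta}f_\delta(x)=(e^{i\delta^2 t\Delta}\phi)(\delta x)\approx\phi(\delta x)$ uniformly for $t\in I$, so $\|e^{it\Delta}f_\delta\|_{L^p_tL^q_x(I\times\R^d)}\sim\delta^{-d/q}$ and $q\geq r$ follows. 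Equivalently, keep your superposition of well-separated packets but give them all the \emph{same} frequency $\xi_0$: then one $\square_k$ sees all $M$ bumps at once and $\|\square_k f\|_{L^r}\sim M^{1/r}$ by spatial disjointness.

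The same confusion enters the scaling inequality. For $f(x)=N^d\phi(Nx)$ there are $\sim N^d$ nontrivial pieces $\square_k f$, each with $L^r_x$ norm (not $L^t$) of size $\sim 1$; taking the $\ell^t_k$ norm then gives $\|f\|_{M^s_{r,t}}\sim N^{s+d/t}$, \emph{not} $N^{s+d/r}$. Your dispersive computation $\|e^{it\Delta}f\|_{L^p_tL^q_x(I\times\R^d)}\sim N^{d-d/q-2/p}$ is fine, so the necessary condition this test actually produces is $d-\frac{d}{q}-\frac{2}{p}\leq s+\frac{d}{t}$, with the \emph{summability} exponent in place of the Lebesgue one. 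This also strongly suggests that the displayed proposition, with $\frac{d}{r}$ where $\frac{d}{t}$ should appear, contains a typographical slip that your argument has inherited: taking $p=q=\infty$, $r=1$, $t=\infty$ and $f=N^d\phi(Nx)$ gives $\|e^{it\Delta}f\|_{L^\infty_{t,x}(I\times\R^d)}\gtrsim |f(0)|\sim N^d$ while $\|f\|_{M^s_{1,\infty}}\sim N^s$, forcing $s\geq d$; the bound $d\leq\frac{d}{t}+s=s$ detects this, but $d\leq\frac{d}{r}+s=d+s$ does not.
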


The most essential point in these results is the case when
\begin{equation}
    \|e^{it\Delta}f\|_{L^{p_{d}}_{t,x}(I\times \R^d)} \lesssim \|f\|_{M^{\varepsilon}_{p_{d},2}(\R^d)}, \label{eq:schippa}
\end{equation}
where $p_{d} := \frac{2(d + 2)}{d},\thinspace\varepsilon > 0$. This case was shown via Bourgain-Demeter's $\ell^2$-decoupling estimate. Furthermore, by Proposition \ref{prop:schippanecessary}, we find this estimate optimal up to $\varepsilon$. After Schippa's work, Lu \cite{lu2023local} studied the local smoothing estimate for fractional Schr\"{o}dinger equations in $\alpha$-modulation spaces, and Chen-Guo-Shen-Yan \cite{chen2024smoothing} considered this type of smoothing estimate on the cylinder $\mathbb{T}^{n}\times\R^{m}$. 

The aim of this paper is to further explore local smoothing estimates and Strichartz-type estimates in modulation spaces. Our first result concerns a refinement of the summability exponent in the modulation norm. 
\begin{thm}
    For any $\varepsilon > 0$, $q\in [1, 4)$, the estimate 
    \begin{equation}
        \|e^{it\Delta}f\|_{L^{4}_{t,x}(I\times\R)}\lesssim_{\varepsilon} \|f\|_{M^{\varepsilon}_{2,q}(\R)}
    \end{equation}
    holds. \label{thm:loocalsmoothing}
\end{thm}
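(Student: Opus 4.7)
The plan is to combine the C\'{o}rdoba--Fefferman type reverse square function estimate---which in $d=1$ reduces to a bilinear $L^4$ identity---with the bilinear Strichartz estimate, closing with Young's convolution inequality on the modulation index. Writing $f = \sum_k \square_k f$ for the unit-frequency decomposition and expanding
\[
\|e^{it\Delta} f\|_{L^4_{t,x}(I \times \R)}^4 = \Big\| \sum_{k,\ell} (e^{it\Delta} \square_k f)(e^{it\Delta} \square_\ell f) \Big\|_{L^2_{t,x}(I \times \R)}^2,
\]
the crucial observation is that the parametrization $\{\xi_1,\xi_2\} \mapsto (\xi_1 + \xi_2,\,\xi_1^2 + \xi_2^2)$ is injective on unordered pairs. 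Together with a standard time-localization (introducing a smooth cutoff $\chi$ with $\chi \geq \mathbf{1}_I$ and $\widehat\chi$ rapidly decaying), this gives that the $(\tau,\xi)$-Fourier supports of the products $(e^{it\Delta}\square_k f)(e^{it\Delta}\square_\ell f)$ for different unordered pairs $\{k,\ell\}$ are essentially disjoint. By Plancherel,
\[
\|e^{it\Delta} f\|_{L^4_{t,x}(I \times \R)}^4 \lesssim \sum_{k,\ell} \|(e^{it\Delta}\square_k f)(e^{it\Delta}\square_\ell f)\|_{L^2_{t,x}(I \times \R)}^2.
\]

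Next, the $d=1$ bilinear Strichartz estimate bounds each summand by $\max(|k-\ell|,1)^{-1}\|\square_k f\|_{L^2}^2\|\square_\ell f\|_{L^2}^2$: the transversality factor $|k-\ell|^{-1/2}$ is squared and follows from Plancherel together with the Jacobian $2|\xi_1 - \xi_2|$ of the pair-sum map, while the diagonal range $|k-\ell| \lesssim 1$ is handled by Bernstein plus an $L^\infty \times L^2$ estimate. Writing $a_k := \|\square_k f\|_{L^2}$,
\[
\|e^{it\Delta} f\|_{L^4_{t,x}(I \times \R)}^4 \lesssim \sum_{k,\ell} \frac{a_k^2 a_\ell^2}{\max(|k-\ell|, 1)}.
\]
Now apply Young's convolution inequality in the bilinear form $\sum K(k-\ell) u_k v_\ell \leq \|K\|_{\ell^r}\|u\|_{\ell^p}\|v\|_{\ell^q}$ (with $1/p + 1/q + 1/r = 2$) using $u_k = v_k = a_k^2$ and $p = q = (4-\varepsilon)/2$; the constraint then forces $r = (4-\varepsilon)/(4 - 2\varepsilon) > 1$ for every $\varepsilon > 0$, and since $K(n) := \max(|n|,1)^{-1} \in \ell^r$ whenever $r > 1$, we conclude
\[
\|e^{it\Delta} f\|_{L^4_{t,x}(I \times \R)}^4 \lesssim_\varepsilon \Big( \sum_k a_k^{4-\varepsilon} \Big)^{4/(4-\varepsilon)} = \|f\|_{M^0_{2, 4-\varepsilon}}^4 \leq \|f\|_{M^\varepsilon_{2, 4-\varepsilon}}^4,
\]
which is the desired estimate (in fact, slightly more: the $\varepsilon$-derivative in the modulation space is not actually needed in this argument).

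The main obstacle is justifying the bilinear almost-orthogonality in the first step in the presence of the $O(1)$-Fourier thickening introduced by the time cutoff: different unordered pairs $\{k,\ell\}$ must continue to give essentially disjoint $(\tau,\xi)$-supports after thickening. This reduces to a routine perturbative computation based on the quantitative injectivity estimate $|\xi_1 - \xi_2| \sim |k-\ell|$ for the pair-sum map (the near-diagonal range $|k - \ell| \lesssim 1$ contributes only bounded multiplicity). Once this is in place, the bilinear Strichartz bound and Young's inequality are entirely routine.
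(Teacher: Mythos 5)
Your proof is correct and, at its core, uses the same three ingredients as the paper: a C\'{o}rdoba--Fefferman type biorthogonality argument in $L^4$, the one-dimensional bilinear Strichartz estimate, and Young's convolution inequality in the index $k$. The organization, however, is genuinely different, and the comparison is instructive. The paper packages the biorthogonality as a reverse square function estimate (Proposition~\ref{prop:cordoba-fefferman type est}), which it applies after a parabolic rescaling that forces $\widehat{f}$ to be supported in a unit ball; this compact-support assumption is then removed by a Littlewood--Paley decomposition, and summing the dyadic pieces via H\"{o}lder is exactly what produces the $\varepsilon$-derivative loss in $M^{\varepsilon}_{2,4-\varepsilon}$. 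You instead expand $\|e^{it\Delta}f\|_{L^4}^4 = \|\sum_{k,\ell}(e^{it\Delta}\square_{k}f)(e^{it\Delta}\square_{\ell}f)\|_{L^2}^2$ directly and prove the almost-orthogonality of the summands inline, using the $2$-to-$1$ nature of $(\xi_1,\xi_2)\mapsto(\xi_1+\xi_2,\xi_1^2+\xi_2^2)$. Because the $O(1)$-overlap count is uniform over all $k,\ell\in\Z$ (the relevant quantity is $|k-\ell|\cdot|(k-\ell)-(j-m)|\lesssim \max(1,|k-\ell|)$ on the anti-diagonals $k+\ell=j+m$), there is no need to truncate in frequency and hence no Littlewood--Paley step. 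You therefore obtain the slightly stronger $\|e^{it\Delta}f\|_{L^4_{t,x}(I\times\R)}\lesssim_{\varepsilon}\|f\|_{M_{2,4-\varepsilon}(\R)}$ with no derivative loss, which implies the stated theorem. (In fact, the paper's Lemma~\ref{lem:mainprooflemma} already has a $\lambda$-uniform constant, so the loss in the paper is also avoidable; your argument makes this transparent.)

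Two points you should tighten up. First, your time cutoff $\chi$ with merely \emph{rapidly decaying} Fourier transform does not produce compactly supported spacetime Fourier supports for $e^{it\Delta}\square_{k}f\cdot\chi$; you either need $\widehat{\chi}$ compactly supported (which is what the paper does in Appendix A with $\eta_{B_{R^2}}^{1/4}$), or you need to decompose the tail dyadically in frequency and absorb the tails using the rapid decay --- standard, but the word ``routine'' is doing some work there. Second, when you invoke ``the quantitative injectivity estimate'' for the pair-sum map, note that after $O(1)$ thickening the $\tau$-extent of the support of the product indexed by $(k,\ell)$ grows like $1+|k-\ell|$ (not $O(1)$), because $(\xi_1-\xi_2)^2-(k-\ell)^2$ has size $\sim|k-\ell|$; the overlap multiplicity is still $O(1)$ uniformly, but the verification is exactly the computation in Appendix~A, not a triviality. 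Once those two items are written out, the proof is complete and slightly sharper than the paper's.
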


In previous works (\cite{schippa2022smoothing}, \cite{lu2023local}, and \cite{chen2024smoothing}), the $\ell^{2}$-decoupling estimate played an essential role in the proof. In contrast, our proof of Theorem \ref{thm:loocalsmoothing} is based on the C\'{o}rdoba-Fefferman type reverse square functions estimate.  

\begin{rem}
    According to Proposition \ref{prop:schippanecessary}, if the estimate
    \begin{equation*}
        \|e^{it\Delta}f\|_{L^{4}_{t,x}(I\times\R)} \lesssim \|f\|_{M^{s}_{p,q}(\R)}
    \end{equation*}
    holds, then we have $q \le 4$. Thus, Theorem \ref{thm:loocalsmoothing} is almost sharp in this sense. However, by Proposition \ref{prop:schippanecessary}, we also have $p\leq 4$. Hence, Theorem \ref{thm:loocalsmoothing} could be improved and the estimate
    \begin{equation*}
        \|e^{it\Delta}f\|_{L^{4}_{t,x}(I\times\R)}\lesssim_{\varepsilon} \|f\|_{M_{4,4}(\R)}. 
    \end{equation*}
    might be true.
\end{rem}

Our estimates can be applied to the study of nonlinear problems. For instance, we can show the local well-posedness result for the cubic nonlinear Schr\"{o}dinger equation:
\begin{align}
    \begin{cases}
        i\frac{\partial u}{\partial t} + \Delta u = \pm |u|^2 u\\
        u(\cdot,0) = f(\cdot). \label{eq:cubicNLS}
    \end{cases}
\end{align}
Interpolating our estimate with 
\begin{equation*}
    \|e^{it\Delta}f(x)\|_{L^{4}_{t,x}(I\times \R)} \lesssim \|f\|_{M^{\frac{1}{4}}_{4,4}(\R)}
\end{equation*}
in Theorem \ref{thm:localsmpthingschippa}, we also have
\begin{equation*}
    \|e^{it\Delta}f(x)\|_{L^{4}_{t,x}(I\times \R)} \lesssim \|f\|_{M^{s}_{p,q}(\R)}
\end{equation*}
for $2\le p \le 4$, $2\le q < 4$, and $s > \frac{1}{4}$. Combining this estimate with the inhomogeneous Strichartz estimate
\begin{equation*}
    \left\|\int^{t}_{0}e^{i(t -s)\Delta}F(s)ds\right\|_{L^{8}_{t}L^{4}_{x}([0,T]\times \R)}\lesssim \|F\|_{L^{\frac{8}{7}}_{t} L^{\frac{4}{3}}_{x}([0,T]\times\R)}, 
\end{equation*}
and applying the standard iteration argument, we obtain the local well-posedness for \eqref{eq:cubicNLS} in $M^{s}_{p,q}(\R)$ with $2\le p \le 4$, $2\le q < 4$, $s > \tfrac{1}{4}$. This result provides a partial refinement of Theorem 1 in \cite{klaus2023}.

We also prove a time-global version of this estimate for randomized initial data. In this paper, we consider Wiener's randomization. Wiener's randomization and assumption (\ref{assumption:randomization}) will be explained in the next section. 
\begin{thm}
    Given $f \in \mathcal{S}'(\R^d)$, let $f^{(\omega)}$ be the Wiener randomization of $f$ with the assumption (\ref{assumption:randomization}) and let $(p,q)\in[1,\infty)^{2}$ satisfy
    \begin{equation*}
        \frac{1}{p} = \frac{d}{2}\left(\frac{1}{2} - \frac{1}{q}\right). 
    \end{equation*}
    If $2 \leq q < \frac{2(d +1)}{d - 1}$, then for any $\varepsilon > 0$, the estimate \begin{equation}
        \|e^{it\Delta}f^{(\omega)}\|_{L^{p}_{t}L^{q}_{x}(\R^{d + 1})} \lesssim \left(\log\frac{1}{\varepsilon} + 1\right)\|f\|_{M_{2,\frac{4q}{q + 2}}(\R^d)} \label{eq:randomrefinedstr}
    \end{equation}
    holds with probability at least $1 - \varepsilon$. \label{thm:randomized}
\end{thm}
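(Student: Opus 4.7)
The plan is the moment method for Wiener-randomized estimates, combined with a deterministic square-function bound that encodes the main gain.

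I begin with the decomposition $f^{(\omega)}=\sum_k g_k(\omega)\,\square_k f$, where $\square_k$ is the unit frequency-cube projection, and set $\phi_k:=e^{it\Delta}\square_k f$, so that $e^{it\Delta}f^{(\omega)}=\sum_k g_k\phi_k$. Choosing $s\geq\max(p,q)$, Minkowski's inequality permits moving $L^s_\omega$ inside $L^p_tL^q_x$, and Khintchine's inequality applied pointwise in $(t,x)$ then yields
\begin{equation*}
\bigl(\mathbb{E}\,\|e^{it\Delta}f^{(\omega)}\|^s_{L^p_tL^q_x}\bigr)^{1/s}\lesssim \sqrt{s}\,\Bigl\|\Bigl(\sum_k|\phi_k|^2\Bigr)^{1/2}\Bigr\|_{L^p_tL^q_x}.
\end{equation*}
Optimizing with $s\sim\log(1/\varepsilon)$ and applying Chebyshev's inequality then produces the $\log(1/\varepsilon)+1$ factor and the high-probability bound.

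The problem thus reduces to the deterministic square-function estimate
\begin{equation*}
\Bigl\|\Bigl(\sum_k|e^{it\Delta}\square_k f|^2\Bigr)^{1/2}\Bigr\|_{L^p_tL^q_x(\R^{d+1})}\lesssim \|f\|_{M_{2,\frac{4q}{q+2}}(\R^d)}.
\end{equation*}
A routine Minkowski inequality (to put $\ell^2$ inside, available because $\min(p,q)\geq 2$) combined with the usual Strichartz estimate on each cube produces only the weaker bound $\|f\|_{M_{2,2}}$. To sharpen the summation exponent, I would exploit the wave-packet structure of each $\phi_k$: for fixed $k$, $\phi_k$ is concentrated on a tube in $\R^{d+1}$ with slope $-2k$, and different tubes are quasi-disjoint. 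A careful bookkeeping in the mixed norm $L^p_tL^q_x$---either via the bilinear Strichartz estimate with its transversality gain in $|k_1-k_2|$, or by interpolation involving a C\'{o}rdoba-Fefferman type reverse square-function inequality in the spirit of Theorem \ref{thm:loocalsmoothing}---should produce the $\ell^{4q/(q+2)}$-summation in place of $\ell^2$. The subcriticality condition $q<\frac{2(d+1)}{d-1}$ is precisely what guarantees global-in-time convergence of the dispersive integrals that enter this analysis.

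The main obstacle I anticipate is executing this refined square-function estimate sharply, with the exact exponent $\frac{4q}{q+2}$. The deterministic analog (with the same modulation-space norm on the right and no randomization) is not covered by Schippa's Theorem~1.1, so genuinely new ingredients beyond standard Strichartz are needed; making the tube analysis or the interpolation precise is the delicate point.
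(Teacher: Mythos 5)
Your first reduction is correct and matches the paper: Minkowski plus a sub-Gaussian Khintchine-type bound (Lemma~3.1 of Burq--Tzvetkov) yields
\begin{equation*}
\Bigl(\mathbb{E}\,\|e^{it\Delta}f^{(\omega)}\|^{r}_{L^p_tL^q_x}\Bigr)^{1/r}
\lesssim \sqrt{r}\,\Bigl\|\bigl(\textstyle\sum_k|e^{it\Delta}\square_k f|^2\bigr)^{1/2}\Bigr\|_{L^p_tL^q_x},
\end{equation*}
and the optimization $r\sim\log(1/\varepsilon)$ with Chebyshev gives the probabilistic statement. So far so good.

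The genuine gap is the deterministic square-function estimate, which you leave as ``should produce \ldots via bilinear Strichartz or interpolation with a C\'{o}rdoba--Fefferman inequality.'' None of the tools you name can close this gap: the bilinear Strichartz estimate developed in the paper is one-dimensional and local in time, and the C\'{o}rdoba--Fefferman inequality is a \emph{reverse} square-function estimate (it bounds $\|F\|_{L^p}$ \emph{by} the square function), so it runs in the wrong direction for the present step, where the square function already sits on the left and must be bounded. Moreover Theorem~\ref{thm:randomized} is global in time and multidimensional, so the local wave-packet/tube bookkeeping you sketch does not apply directly.

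The missing ingredient is the orthonormal Strichartz estimate of Frank--Lewin--Lieb--Seiringer and Frank--Sabin, stated in the paper as
\begin{equation*}
\Bigl\|\bigl(\textstyle\sum_{k}|e^{it\Delta}f_k|^2\bigr)^{1/2}\Bigr\|_{L^p_tL^q_x(\R^{d+1})}\lesssim \bigl\|\|f_k\|_{L^2}\bigr\|_{\ell^{2\beta}_k},\qquad \beta\le\tfrac{2q}{q+2},
\end{equation*}
valid for any orthogonal system $(f_k)$ in $L^2(\R^d)$. The projections $\square_k f$ are not pairwise orthogonal, since $\varphi(\cdot-k)$ has support of side $2$, but splitting $\Z^d$ into the $2^d$ residue classes $2\Z^d+s$, $s\in\{0,1\}^d$, makes each subfamily $\{\square_n f\}_{n\in 2\Z^d+s}$ orthogonal by disjoint Fourier support. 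Applying the orthonormal Strichartz bound on each class and summing the $2^d$ pieces gives exactly
\begin{equation*}
\Bigl\|\bigl(\textstyle\sum_k|e^{it\Delta}\square_k f|^2\bigr)^{1/2}\Bigr\|_{L^p_tL^q_x}\lesssim \bigl\|\|\square_k f\|_{L^2}\bigr\|_{\ell^{4q/(q+2)}_k}=\|f\|_{M_{2,\frac{4q}{q+2}}},
\end{equation*}
with the sharp exponent, and the constraint $2\le q<\frac{2(d+1)}{d-1}$ is precisely the admissibility range of that theorem. You should replace the speculative tube/interpolation step with this citation and the parity decomposition; the rest of your argument is correct.
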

The randomization technique for nonlinear PDE was first introduced by Bourgain \cite{bourgain1996invariant}. He treated nonlinear Schr\"{o}dinger equations in 2-dimensional torus. After that, Burq and Tzvetkov studied nonlinear wave equations with randomized initial data in \cite{burq2008random} and \cite{burq2008random2}. In addition to these papers, many authors have studied nonlinear PDE with randomized initial data setting. 

Improved Strichartz estimates for Schr\"{o}dinger equations with randomized initial data were first discussed by B\'{e}nyi-Oh-Pocovnicu \cite{benyi2015wiener}. The Strichartz estimate of modulation spaces are usually known as the following form (for example, see Proposition 5.1 in \cite{wang2007global}):
\begin{equation*}
    \|\ev{k}^{\alpha}\|e^{it\Delta}\square_{k}f\|_{L^{p}_{t}L^{q}_{x}(\R\times\R^{d})}\|_{\ell^{\beta}_{k}} \lesssim \|f\|_{M^{\gamma}_{r,u}(\R^{d})}. 
\end{equation*}
According to Remark 2.4. in \cite{benyi2015wiener}, if we consider this type of Strichartz estimate, there is no advantage of randomization. However, in our setting, by using the orthogonal Strichartz estimate, we gain refinement in the summable exponent in the modulation norm. This exponent expresses how fast the Fourier transform of the function decays at infinity. Thus, improvement in the summability exponent could be regarded as an improvement in the regularity of initial data in this sense. 

As a by-product of the discussion about local smoothing estimates, we show the sharpness of the following reverse square function estimate for slabs. 
\begin{prop}[Exercise 5.30 {\cite{demeter2020fourier}}]
    Let $d \geq 1$ and $R\geq 1$. Suppose that 
    \begin{equation*}
    \mathrm{supp}\thinspace\widehat{F}\subset \{(\xi, |\xi|^{2} + \tau)\in \R^{d}\times\R \thinspace;\thinspace |\xi|\leq 1,\thinspace|\tau| \leq R^{-2}\}. 
    \end{equation*}
    If $p \geq \frac{2(d +2)}{d}$, then we have 
    \begin{equation}
        \|F\|_{L^{p}(\R^{d +1})} \lesssim_{\varepsilon} R^{\frac{d}{2} - \frac{d + 1}{p} + \varepsilon}\|(\sum_{k\in\Z^{d}}|F_{\square_{k}}|^{2})^{\frac{1}{2}}\|_{L^{p}(\R^{d +1})} \label{prop:cfdemeter}
    \end{equation}
    where $F_{\square_{k}} := \F^{-1}[\chi_{[0,1]^{d}\times\R}(R(\cdot - k))\widehat{F}]$. \label{prop:demeter}
\end{prop}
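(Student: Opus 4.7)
The plan is to prove Proposition \ref{prop:demeter} by combining Bourgain--Demeter $\ell^2$-decoupling at scale $R^{-1/2}$ with local Bernstein-type estimates inside each cap, and then to extend to all $p \ge p_d := 2(d+2)/d$ via interpolation. The key reduction is first to the critical exponent $p = p_d$: viewing the map $T : \{F_{\square_k}\}_k \mapsto \sum_k F_{\square_k}$ as a linear operator from $L^q(\ell^2)$ to $L^q$, the two endpoint bounds are the (to-be-proved) $L^{p_d}$ estimate with operator norm $R^{1/p_d + \varepsilon}$ and the trivial $L^\infty$ bound $\|T\|_{L^\infty(\ell^2)\to L^\infty} \le R^{d/2}$ coming from pointwise Cauchy--Schwarz over the at most $R^d$ cubes. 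Riesz--Thorin interpolation then yields the operator norm $R^{d/2 - (d+1)/p + \varepsilon}$ at $L^p$ for every $p \ge p_d$, since the exponent is linear in $1/p$.

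For the critical case I would apply $\ell^2$-decoupling at scale $R^{-1/2}$, the natural scale at which the slab thickness $R^{-1}$ matches the paraboloid's curvature, to obtain
$$\|F\|_{L^{p_d}} \lesssim_\varepsilon R^\varepsilon \Bigl(\sum_\tau \|F_\tau\|_{L^{p_d}}^2\Bigr)^{1/2}, \qquad F_\tau := \sum_{k \in \tau} F_{\square_k},$$
with $\tau$ ranging over $R^{-1/2}$-caps. Inside each $\tau$ the Fourier support of $F_\tau$ is essentially a flat parallelepiped of dimensions $(R^{-1/2})^d \times R^{-1}$ containing $R^{d/2}$ nearly axis-parallel $R^{-1}$-cubes, and Riesz--Thorin interpolation between Plancherel at $L^2$ (constant $1$) and pointwise Cauchy--Schwarz at $L^\infty$ (constant $R^{d/4}$) yields the cap-level reverse square function $\|F_\tau\|_{L^{p_d}} \lesssim R^{1/p_d}\|(\sum_{k\in\tau}|F_{\square_k}|^2)^{1/2}\|_{L^{p_d}}$.

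The main obstacle is then combining these two ingredients while preserving the sharp exponent, since a naive substitution produces the $\ell^2$-sum $(\sum_\tau \|G_\tau\|_{L^{p_d}}^2)^{1/2}$ of partial square-function norms $G_\tau := (\sum_{k\in\tau}|F_{\square_k}|^2)^{1/2}$, from which Minkowski's inequality goes in the wrong direction to reach the full $\|(\sum_k|F_{\square_k}|^2)^{1/2}\|_{L^{p_d}}$. My plan to bypass this is to recast the decoupling in its $\ell^{p_d}$-form via Cauchy--Schwarz on the $R^{d/2}$ caps, and then apply the pointwise superadditivity $\sum_\tau G_\tau^{p_d} \le (\sum_\tau G_\tau^2)^{p_d/2}$ (valid for nonnegative $G_\tau$ and $p_d \ge 2$), which integrates cleanly to the full square-function $L^{p_d}$-norm. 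Careful tracking of exponents through this route, trading the cap-level savings from parabolic rescaling against the Cauchy--Schwarz loss in the $\ell^2\to\ell^{p_d}$ conversion, should then deliver the sharp $R^{1/p_d + \varepsilon} = R^{d/2 - (d+1)/p_d + \varepsilon}$ constant.
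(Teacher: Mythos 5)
The paper does not prove this proposition: it is quoted verbatim as Exercise 5.30 in Demeter's monograph \cite{demeter2020fourier}, so there is no in-paper argument to compare against. Your proposal must therefore stand on its own, and there is a genuine gap at the critical exponent $p_d := \tfrac{2(d+2)}{d}$.

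Track the exponents in your Step 2. Passing from the $\ell^2$ form of Bourgain--Demeter decoupling to the $\ell^{p_d}$ form via Cauchy--Schwarz over the $\sim R^{d/2}$ caps of scale $R^{-1/2}$ costs $(R^{d/2})^{1/2-1/p_d}=R^{1/p_d}$. Inside a cap $\tau$, after parabolic rescaling $\tau$ becomes the \emph{unit}-thickness neighbourhood of the paraboloid partitioned into $R^{d/2}$ cubes of side $R^{-1/2}$; over such a cube the paraboloid deviates by only $R^{-1}\ll R^{-1/2}$, so the cubes sit in an essentially flat slab and the only available cap-level reverse square function inequality is the trivial $L^2$--$L^\infty$ interpolation you describe, which costs \emph{another} $(R^{d/2})^{1/2-1/p_d}=R^{1/p_d}$. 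Your phrase ``cap-level savings from parabolic rescaling'' suggests you expect a gain at this step, but there is none --- rescaling does not create curvature at the cube scale. The superadditivity $\sum_\tau G_\tau^{p_d}\le(\sum_\tau G_\tau^2)^{p_d/2}$ is correct and costs nothing further, so the net constant your chain produces is $R^{2/p_d+\varepsilon}$, which exceeds the target $R^{1/p_d+\varepsilon}=R^{\frac{d}{2}-\frac{d+1}{p_d}+\varepsilon}$ by a full factor $R^{1/p_d}$. The focusing example $\widehat F=\mathbbm{1}_{\mathcal{N}_{R^{-1}}(\mathbb{P}^d)}$ shows the sharp constant is indeed $R^{1/p_d}$, so the extra loss is real, not a bookkeeping artefact. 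This is a known obstruction: $\ell^2$-decoupling and the reverse square function inequality are distinct statements, and converting between them by Cauchy--Schwarz in the cap sum together with the trivial cap-level square function doubles the exponent. (Your interpolation reduction from general $p\ge p_d$ to $p_d$ using the $L^\infty(\ell^2)\to L^\infty$ bound $R^{d/2}$ is fine.) A route that does give the sharp constant at $p_d$ avoids decoupling entirely: apply the Stein--Tomas/Strichartz bound $\|F\eta_B\|_{L^{p_d}}\lesssim R^{-1/2}\|F\eta_B\|_{L^2}$ on each ball $B$ of radius $R$ (with $\eta_B$ a suitable cutoff whose Fourier support lives at scale $R^{-1}$), use Plancherel to turn $\|F\eta_B\|_{L^2}$ into the $L^2$ norm of the square function, and then use H\"older at scale $|B|\sim R^{d+1}$ to pass to $L^{p_d}(w_B)$; the exponents combine to $R^{-1/2+(d+1)(1/2-1/p_d)}=R^{1/p_d}$, and summing the local estimates over the cover recovers the global inequality.
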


Estimate (\ref{prop:cfdemeter}) with $p\geq \frac{2(d +2)}{d}$ (resp. $R^{\frac{d}{2} - \frac{d + 1}{p}+\varepsilon}$) is replaced by $p\geq \frac{2(d +1)}{d}$ (resp. $R^{\varepsilon}$) is known as the reverse square function conjecture. This conjecture remains open except for the case $(d, p)=(1,4)$. In this article, we focus on the supercritical case $p= \frac{2(d +2)}{d}$. We show the sharpness of this inequality when $p = \frac{2(d + 2)}{d}$ using the necessary conditions for the Strichartz-type estimate in modulation spaces. 
\begin{prop}
    Let $d\geq 1$, $R\geq 1$, $s > 0$, and $p = \frac{2(d + 2)}{d}$. Assume that the inequality
    \begin{equation}
        \|F\|_{L^{p}(\R^{d + 1})} \lesssim R^{s}\|(\sum_{k\in\Z^{d}}|F_{\square_{k}}|^{2})^{\frac{1}{2}}\|_{L^{p}(\R^{d + 1})}\label{eq:asmnecessary}
    \end{equation}
    holds for $F\in \mathcal{S}(\R^d)$ with $\suppm \widehat{F}\subset \{(\xi, |\xi|^{2} + \tau)\in \R^{d}\times\R \thinspace;\thinspace |\xi|\leq 1,\thinspace|\tau| \leq R^{-2}\}$. Then $s \geq \frac{d}{2} - \frac{d +1}{p}$. \label{prop:necessary for Demeter}
\end{prop}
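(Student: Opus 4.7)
The plan is to apply the hypothesised estimate (\ref{eq:asmnecessary}) to the Schr\"odinger extension of a fixed frequency-localised Schwartz datum, compare both sides asymptotically in $R$, and use the classical (global-in-time) Strichartz estimate to keep the LHS bounded below while Minkowski and Bernstein make the RHS shrink. As $R \to \infty$ the comparison forces the desired lower bound $s \geq d/2 - (d+1)/p$.

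Fix once and for all a nonzero $\varphi \in C_c^\infty(\R^d)$ with $\suppm \varphi \subset \{|\xi| \leq 1\}$ and set $f := \F^{-1}\varphi$. Fix $\chi \in \mathcal{S}(\R)$ with $\widehat{\chi}$ compactly supported in $[-1,1]$ and $\chi(0) \neq 0$ (for instance, take $\widehat{\chi}$ a nonnegative bump). For $R \geq 1$ put
\[
F(x,t) := \chi(t/R)\, e^{it\Delta}f(x).
\]
Then $\widehat{F}(\xi,\eta) = R\,\widehat{\chi}\bigl(R(\eta - |\xi|^2)\bigr)\varphi(\xi)$ is supported in the slab $\{(\xi,\eta) : |\xi|\leq 1,\ |\eta - |\xi|^2| \leq R^{-1}\}$, so (\ref{eq:asmnecessary}) applies to $F$. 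Putting $f_k := \F^{-1}[\varphi\,\chi_{\square_k}]$, a direct computation yields $F_{\square_k}(x,t) = \chi(t/R)\, e^{it\Delta}f_k(x)$, and Fourier orthogonality gives $\sum_k \|f_k\|_{L^2}^2 = \|f\|_{L^2}^2$.

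For the LHS, the Strichartz estimate yields $e^{it\Delta}f \in L^p_{t,x}(\R^{d+1})$ with strictly positive norm; since $\chi(t/R) \to \chi(0) \neq 0$ pointwise with $|\chi(t/R)| \leq \|\chi\|_\infty$, dominated convergence gives $\|F\|_{L^p(\R^{d+1})} \to |\chi(0)|\|e^{it\Delta}f\|_{L^p(\R^{d+1})} > 0$, so $\|F\|_{L^p} \gtrsim 1$ uniformly for all sufficiently large $R$. For the RHS, $p \geq 2$ permits Minkowski in the $\ell^2_k$-versus-$L^p_x$ ordering, and Bernstein's inequality on the cube of side $R^{-1}$ then gives, uniformly in $t$,
\[
\Big\|\bigl(\sum_k|e^{it\Delta}f_k|^2\bigr)^{1/2}\Big\|_{L^p_x} \leq \Big(\sum_k \|e^{it\Delta}f_k\|_{L^p_x}^2\Big)^{1/2} \lesssim R^{-d(1/2 - 1/p)}\Big(\sum_k\|f_k\|_{L^2}^2\Big)^{1/2} \lesssim R^{-d(1/2 - 1/p)}.
\]
Integrating in $t$ against $\chi(t/R)$ contributes $\|\chi(\cdot/R)\|_{L^p_t} \simeq R^{1/p}$, so the full RHS of (\ref{eq:asmnecessary}) is $\lesssim R^{s + (d+1)/p - d/2}$.

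Combining the two estimates yields $1 \lesssim R^{s + (d+1)/p - d/2}$ for every sufficiently large $R$, which forces $s \geq d/2 - (d+1)/p$. The only subtlety is that fitting $\widehat{F}$ exactly inside a slab of width $R^{-1}$ rules out compactly supported time cutoffs; one must take $\chi$ of Paley--Wiener type, and then the fact that the LHS does not collapse relies crucially on the global-in-time Strichartz estimate $\|e^{it\Delta}f\|_{L^p(\R^{d+1})} \lesssim \|f\|_{L^2}$, which is exactly the feature of the supercritical regime $p \geq p_d$ in which the proposition lives.
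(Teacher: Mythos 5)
Your proof is correct, and it takes a genuinely different route from the paper. The paper argues \emph{indirectly}: it first invokes Lemma~\ref{lem:reverse implies strichartz} to upgrade the reverse square function estimate to a global Strichartz estimate in the modulation space $M^{s+\varepsilon}_{2,\frac{2(d+2)}{d+1}}(\R^d)$ (an argument that itself rests on Lemma~\ref{lem:sufficientpart}, a parabolic rescaling, and the orthonormal Strichartz estimate of Frank--Lewin--Lieb--Seiringer/Frank--Sabin), and then applies Schippa's necessary conditions for such Strichartz estimates to read off the lower bound on $s$. You instead test the hypothesised inequality \emph{directly} on the wave packet $F(x,t) = \chi(t/R)\,e^{it\Delta}f(x)$ with $\widehat{\chi}$ of Paley--Wiener type, and you balance an $R$-independent lower bound on the left against an explicit $R^{s-d/2+(d+1)/p}$ upper bound on the right coming from Minkowski, Bernstein on the $R^{-1}$-cubes, and $\|\chi(\cdot/R)\|_{L^p_t}\sim R^{1/p}$. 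This is shorter, self-contained, and avoids both the orthonormal Strichartz machinery and Schippa's necessary conditions entirely; it also works verbatim for any exponent $p$ for which $e^{it\Delta}f \in L^p_{t,x}(\R^{d+1})$, not only the diagonal endpoint $p=\frac{2(d+2)}{d}$. One small simplification worth noting: the dominated-convergence step (and hence the appeal to global Strichartz) is optional, since restricting the $t$-integration to $[-1,1]$ already gives $\|F\|_{L^p}^p \geq c^p \int_{|t|\leq 1}\int_{\R^d} |e^{it\Delta}f|^p\,dx\,dt > 0$ uniformly for large $R$, using only that $|\chi(t/R)|\gtrsim 1$ on $[-1,1]$ for $R$ large. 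Also, to match the proposition's projections exactly you should take the smooth cutoffs $f_{\square_k}$ rather than the sharp $\chi_{\square_k}$; this changes nothing, since one still has $\sum_k \|f_{\square_k}\|_{L^2}^2 \lesssim \|f\|_{L^2}^2$ by the bounded overlap of the bumps.
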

Gan \cite{gan2024small} showed the sharpness of Proposition \ref{prop:demeter} when $d = 1$ and $p \geq 2$. On the other hand, we treat the multidimensional and $p = \frac{2(d + 2)}{d}$ cases.

\section{Preliminaries}
\subsection{Modulation spaces}
We briefly recall modulation spaces. For further details and applications, for example, see \cite{Benyi-Okoudiou}, \cite{cordero-rodino}, \cite{feichtinger1983modulation}, \cite{grochenig2001foundations}, and \cite{wang_harmonic}. For the definition of modulation spaces, we introduce the following Fourier multiplier:
\begin{equation*}
    \widehat{\square_{k}f}(\xi):= (\varphi(D - k)f)\thinspace\widehat{}\thinspace(\xi) := \varphi(\xi - k)\widehat{f}(\xi), \quad \xi\in\R^d
\end{equation*}
where $\varphi\in C^{\infty}_{0}(\R^d)$ with $\mathrm{supp}\thinspace\varphi \subset [-1,1]^d$ and ${\displaystyle\sum_{k\in\Z^{d}}}\varphi(\xi-k) = 1$ for all $\xi\in\R^d$. The (quasi) norm of modulation spaces is defined for any $f\in\mathcal{S}'(\R^d)$, $s\in\R$, and $p,q\in (0,\infty]$ as follows:
\begin{equation*}
    \|f\|_{M^{s}_{p,q}(\R^d)} := \left\|\ev{k}^{s}\|\square_{k}f(x)\|_{L^{p}_{x}(\R^d)}\right\|_{\ell^{q}_{k}(\Z^d)}
\end{equation*}
where $\ev{k} := (1 + |k|^{2})^{\frac{1}{2}}$. For $s\in\R$, $p,q\in(0,\infty]$, we define modulation spaces as follows: 
\begin{align*}
    M^{s}_{p,q}(\R^{d}) &:= \{f\in\mathcal{S}'(\R^{d}\thinspace;\thinspace \|f\|_{M^{s}_{p,q}} < \infty\}, \\
    M_{p,q}(\R^d) &:= M^{0}_{p,q}(\R^d). 
\end{align*}
Note that modulation spaces are quasi-Banach spaces and do not depend on the choice of window function $\varphi$. 

In the proof of the main theorem, we use the boundedness of the Schr\"{o}dinger propagators on modulation spaces. This is established in \cite{benyi2007unimodular}. 
\begin{prop}
    Let $1 \leq p,q \leq \infty$ and $t \in\R$. Then the inequality 
    \begin{equation*}
        \|e^{it\Delta}f\|_{M_{p,q}(\R^d)} \lesssim (1 + |t|^{2})^{\frac{d}{2}}\|f\|_{M_{p,q}(\R^d)}
    \end{equation*}
    holds for all $f\in\mathcal{S}'(\R^d)$. 
    \label{prop:unimodular}
\end{prop}

The Littlewood-Paley characterization for modulation spaces is useful.
\begin{prop}[{\cite[Theorem 1]{chaichenets2020local}}]
    Let $d\in\mathbb{N}$, $p,q\in [1,\infty]$, $s\in\R$. Then \begin{equation*}
        \|f\|_{M^{s}_{p,q}(\R^d)} \sim \|2^{sj}\|\Delta_{j}f\|_{M_{p,q}(\R^d)}\|_{\ell^{q}_{j}}
    \end{equation*}
    where $\{\Delta_{j}\}$ denotes the Littlewood-Paley decomposition. \label{thm:littlewoodpaley}
\end{prop}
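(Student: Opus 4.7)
My plan is to recover the lower bound on $s$ by deriving from the assumed reverse square function inequality (\ref{eq:asmnecessary}) a Strichartz-type estimate in a modulation space and then invoking the necessary conditions provided by Proposition \ref{prop:schippanecessary}. The philosophy is that (\ref{eq:asmnecessary}), after parabolic rescaling, becomes an inequality that reshuffles the free propagator over unit modulation blocks, and that Schippa's necessary conditions then pin down how large $s$ must be.

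The first step is to set up the reduction. For $f\in\mathcal{S}(\R^{d})$ with $\widehat{f}$ supported in $\{|\xi|\leq N\}$, I form $F(x,t)=\chi(t)e^{it\Delta}f(x)$ with a fixed cutoff $\chi\in C_{c}^{\infty}([-1,1])$. The time truncation produces the unit thickening of the paraboloid above $\{|\xi|\leq N\}$, so the parabolic dilation $(x,t)\mapsto(x/N,t/N^{2})$ carries $F$ to a function $G$ whose Fourier support lies in $\{|\xi|\leq 1,\,|\tau-|\xi|^{2}|\leq N^{-2}\}\subset\{|\xi|\leq 1,\,|\tau-|\xi|^{2}|\leq N^{-1}\}$. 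The hypothesis of (\ref{eq:asmnecessary}) is therefore satisfied with $R=N$, and under this dilation the caps of size $1/N$ on the base paraboloid correspond exactly to the unit-size modulation cubes $\square_{j}$ of the original $F$. Undoing the scaling, (\ref{eq:asmnecessary}) becomes the unit-scale reverse square function
\[
\|F\|_{L^{p}(\R^{d+1})}\lesssim N^{s}\Bigl\|\bigl(\sum_{j\in\Z^{d}}|\square_{j}F|^{2}\bigr)^{1/2}\Bigr\|_{L^{p}(\R^{d+1})}.
\]

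Second, I would bound the right-hand side by a modulation-space norm of the initial datum. Each $\square_{j}F=\chi(t)e^{it\Delta}\square_{j}f$ has unit-sized spatial frequency support, so the classical $L^{p_{d}}$ Strichartz estimate yields $\|\square_{j}F\|_{L^{p_{d}}_{t,x}}\lesssim\|\square_{j}f\|_{L^{2}}$. Using Minkowski's inequality (valid because $p_{d}\geq 2$) to exchange $L^{p}(\ell^{2})$ into $\ell^{2}(L^{p})$, and then applying the standard embedding $\ell^{2}\hookrightarrow\ell^{q}$ with the cost $N^{d(1/2-1/q)}$ over the $\lesssim N^{d}$ nonzero boxes, one rewrites the estimate as
\[
\|e^{it\Delta}f\|_{L^{p_{d}}_{t,x}(I\times\R^{d})}\lesssim \|f\|_{M^{s'}_{r,q}(\R^{d})}
\]
with $(r,q)$ and $s'$ determined by the chosen exchanges in terms of $s$.

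Third, I would apply Proposition \ref{prop:schippanecessary} to this modulation-space Strichartz estimate. The necessary condition $d-d/q-2/p\leq d/r+s'$, combined with the explicit relation between $s'$ and $s$ coming from the embedding losses, forces $s\geq d/2-(d+1)/p$ after an optimal choice of the pair $(r,q)$.

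The main obstacle is the last stage: calibrating the exchange exponents $(r,q)$ so that the gain $d-d/q-2/p-d/r$ from Schippa's condition exactly balances the embedding losses that enter $s'$. A suboptimal choice produces only $s\geq 0$ or another trivial bound, so the proof essentially amounts to choosing $(r,q)$ that makes the necessary condition sharp at the critical exponent $p=p_{d}$.
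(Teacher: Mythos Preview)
Your proposal does not address the stated proposition at all. The statement you are asked to prove is the Littlewood--Paley characterization of modulation spaces,
\[
\|f\|_{M^{s}_{p,q}(\R^d)} \sim \bigl\| 2^{sj}\|\Delta_{j}f\|_{M_{p,q}(\R^d)}\bigr\|_{\ell^{q}_{j}},
\]
which is a structural result about the modulation norm and has nothing to do with reverse square function estimates, Strichartz inequalities, or Schippa's necessary conditions. What you have written is instead an outline of an argument for Proposition~\ref{prop:necessary for Demeter} (the sharpness of the exponent in the reverse square function estimate at $p=\tfrac{2(d+2)}{d}$). None of the ingredients you invoke --- parabolic rescaling of the propagator, the assumed inequality (\ref{eq:asmnecessary}), or Proposition~\ref{prop:schippanecessary} --- bear on the equivalence of the two norms above.

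In the paper this proposition is simply quoted from \cite{chaichenets2020local} and no proof is given; a correct write-up would either cite that reference or supply the standard argument: for $\gtrsim$, use that on each dyadic shell $\{|k|\sim 2^{j}\}$ the weight $\langle k\rangle^{s}$ is comparable to $2^{sj}$ and apply almost-orthogonality of $\Delta_{j}$ with respect to the $\square_{k}$; for $\lesssim$, sum over $j$ using the finite overlap of the Littlewood--Paley pieces. Your current text should be discarded and replaced with an argument (or citation) of this type.
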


\subsection{Wiener's randomization}
We also prove space-time estimates for randomized initial data. It is well-known that 
\begin{dfn}
    Let $\{g_{k}\}_{k\in\mathbb{Z}^d}$ be a sequence of complex-valued independent random variables on a probability space $(\Omega, \mathfrak{F}, P)$, where the real parts and imaginary parts of $g_{k}$ are independent. Then, we define the Wiener randomization of $f\in\mathcal{S}'(\R^d)$ as
    \begin{equation}
        f^{\omega}:= \sum_{k\in\mathbb{Z}^d}g_{k}(\omega)\varphi(D-k)f
    \end{equation}
    where the Fourier multipliers $\varphi(D - k)$ are defined in the definition of modulation spaces. 
\end{dfn}

Let $\mu^{(1)}_{k}$ and $\mu^{(2)}_{k}$ denote the distributions of $\mathrm{Re}\thinspace g_{k}$ and $\mathrm{Im}\thinspace g_{k}$, respectively. We will also make the following additional assumption: there exists $c > 0$ such that 
\begin{equation}
    \left|\int_{\R^d}e^{\gamma x}d\mu^{(i)}_{k}\right| \leq e^{c\gamma^2}\label{assumption:randomization}
\end{equation}
for all $\gamma \in \R$, $k\in\Z^d$, $i = 1,2$.

\subsection{Strichartz estimate for orthogonal initial data.}
In the proof for the main theorems, we will use the Strichartz estimate for orthonormal initial data. 
The following Strichartz estimates for orthonormal initial data were established by Frank-Lewin-Lieb-Seiringer \cite{frank2014strichartz} and Frank-Sabin \cite{frank2017restriction}:
\begin{thm}
    Let $(p,q)\in [1,\infty)^{2}$ satisfy \begin{equation}
        \frac{1}{p} = \frac{d}{2}\left(\frac{1}{2} - \frac{1}{q}\right). 
    \end{equation}
     If $2 \leq q \leq \frac{2(d+1)}{d-1}$ and $\beta \leq \frac{2q}{q+2}$, then, 
    \begin{equation}
        \left\|\sum^{\infty}_{k=1}\nu_{k}|e^{it\Delta}f_{k}|^2\right\|_{L^{\frac{p}{2}}_{t}L^{\frac{q}{2}}_{x}(\R^{d + 1})} \lesssim \|\nu\|_{\ell^{\beta}}.\label{eq:ons}
    \end{equation}
    holds for any orthonormal systems $(f_{k})^{\infty}_{k = 1}$ in $L^{2}(\R^d)$ and for any $\{\nu_{k}\}\subset \ell^{\beta}$. This estimate is sharp in the sense that the estimate fails if $\beta > \frac{2q}{q+2}$.
    
\end{thm}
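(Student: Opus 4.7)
The plan is to prove (\ref{eq:ons}) at the critical summation exponent $\beta=2q/(q+2)$ by duality and complex interpolation, and then to verify sharpness via a wave-packet construction; smaller $\beta$ follow by Hölder in $\nu$.

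First I would set up the duality reformulation. Write $\gamma=\sum_k\nu_k|f_k\rangle\langle f_k|$, so that $\|\gamma\|_{\mathfrak{S}^\beta}=\|\nu\|_{\ell^\beta}$ and $\rho_\gamma(t,x)=\sum_k\nu_k|e^{it\Delta}f_k(x)|^2$ is the spatial diagonal of the time-evolved operator $e^{it\Delta}\gamma e^{-it\Delta}$. Pairing against a test function $W\in L^{(p/2)'}_tL^{(q/2)'}_x$ and using cyclicity of trace,
$\int W\rho_\gamma\,dt\,dx=\operatorname{Tr}(\gamma T_W),\ T_W:=\int_{\R}e^{-it\Delta}W(t,\cdot)e^{it\Delta}\,dt,$
so by Schatten trace duality (\ref{eq:ons}) is equivalent to $\|T_W\|_{\mathfrak{S}^{\beta'}}\lesssim\|W\|_{L^{(p/2)'}_tL^{(q/2)'}_x}$. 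Factoring $T_W=B^*\operatorname{sgn}(W)B$ with $B:=|W|^{1/2}e^{it\Delta}\colon L^2_x\to L^2_{t,x}$ and the Schatten Hölder inequality reduces matters to $\|B\|_{\mathfrak{S}^{2\beta'}}\lesssim\||W|^{1/2}\|_{L^{p'}_tL^{q'}_x}$.

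Next, I would prove this Schatten bound by Stein complex interpolation on the analytic family $B_zf:=|W|^{z/2}e^{it\Delta}f$ (with $W$ regularised). On $\operatorname{Re}z=0$, the standard Strichartz estimate yields $\|B_{is}\|_{\mathrm{op}}\lesssim 1$, the $\mathfrak{S}^\infty$ endpoint. On a line $\operatorname{Re}z=z_1>0$ chosen to match exponents, the integral kernel is $B_{z_1}(t,x;y)=|W(t,x)|^{z_1/2}K_t(x-y)$ with $K_t(x)=(4\pi it)^{-d/2}e^{i|x|^2/(4t)}$, and combining the dispersive bound $|K_t|\lesssim|t|^{-d/2}$ with Hölder in $W$ controls the Hilbert-Schmidt norm by a mixed Lebesgue norm of $|W|^{z_1/2}$; the admissibility restriction $q\le 2(d+1)/(d-1)$ is precisely what makes this $\mathfrak{S}^2$ endpoint well-posed. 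Interpolating between the two lines then produces the target bound at $\mathfrak{S}^{2\beta'}$ with $\beta'=2q/(q-2)$, i.e.\ $\beta=2q/(q+2)$.

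For sharpness I would use a semiclassical family $f_k(x)=h^{-d/2}\phi(h^{-1}(x-x_k))e^{i\xi_k\cdot x}$ with $\{(x_k,\xi_k)\}$ separated in phase space, so that the $f_k$ are approximately orthonormal and each $e^{it\Delta}f_k$ is a wave packet of width $h$ travelling ballistically along a tube. Taking $N$ such packets with $\nu_k\equiv 1$ and comparing $\|\rho_\gamma\|_{L^{p/2}_tL^{q/2}_x}$ with $\|\nu\|_{\ell^\beta}=N^{1/\beta}$ in the semiclassical limit $N\to\infty$ forces $\beta\le 2q/(q+2)$ exactly along the admissible line. The main obstacle is the Hilbert-Schmidt endpoint in the interpolation: choosing $z_1$ and tracking the interpolation bookkeeping so that the dispersive decay of $K_t$ and the integrability of $W$ balance to produce the sharp $\beta=2q/(q+2)$ throughout the full Strichartz range.
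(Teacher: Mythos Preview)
This theorem is not proved in the paper; it is quoted in the preliminaries with attribution to \cite{frank2014strichartz} and \cite{frank2017restriction}, so there is no argument in the paper itself to compare against.

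Your outline follows the overall strategy of those references (trace duality to a Schatten bound on $T_W$, Stein interpolation, semiclassical wave packets for sharpness), but the Hilbert--Schmidt endpoint you describe fails as written. The kernel of $B_{z_1}\colon L^2_y\to L^2_{t,x}$ is $|W(t,x)|^{z_1/2}K_t(x-y)$ with $|K_t(x-y)|=(4\pi|t|)^{-d/2}$, a quantity independent of $y$; hence $\int_{\R^d}|K_t(x-y)|^2\,dy=\infty$ and $\|B_{z_1}\|_{\mathfrak{S}^2}=\infty$ for every nonzero $W$. No choice of $z_1$ or H\"older exponent on $W$ repairs this, so the dispersive bound alone cannot supply the $\mathfrak{S}^2$ endpoint for the family $B_z=|W|^{z/2}e^{it\Delta}$.

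What is actually done in \cite{frank2014strichartz,frank2017restriction} is to put the complex parameter on the propagator rather than on $W$: one interpolates an analytic family of the type $W_1\,U_z\,W_2$ acting on $L^2_{t,x}$, where $U_z$ has a kernel of the form $c(z)(t-s)^{-d/2-z}e^{i|x-y|^2/4(t-s)}$ (equivalently, one works with powers of the resolvent on the Fourier side). On one line the kernel is bounded pointwise, and the $\mathfrak{S}^2$ bound then comes from Hardy--Littlewood--Sobolev in the time variable $t-s$; on the other line Plancherel gives the operator-norm bound. The threshold $q=\tfrac{2(d+1)}{d-1}$ arises from the HLS exponent in this computation, not from the direct Hilbert--Schmidt calculation you sketch. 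Your proposal has identified the correct architecture but not the mechanism that makes the Schatten endpoint finite.
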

Note that the inequality (\ref{eq:ons}) is equivalent to the following
\begin{equation}
    \left\|\left( \sum^{\infty}_{k = 1}|e^{it\Delta}f_{k}(x)|^{2}\right)^{\frac{1}{2}}\right\|_{L^p_{t}L^q_{x}(\R^{d + 1})} \lesssim \left\|\|f_{k}\|_{L^{2}_{x}(\R^d)}\right\|_{\ell^{2\beta}_{k}}
    \label{eq:ogs}
\end{equation}
where $(f_{k})^{\infty}_{k = 1}\subset L^{2}(\R^d)$ is an orthogonal system. Using this estimate (\ref{eq:ogs}), Frank and Sabin \cite{frank2017restriction} improved the Strichartz estimates for a single initial datum in Besov spaces. We will employ their idea to show the refined Strichartz estimates in modulation spaces.

\subsection{Notations}
\begin{itemize}
    \item $\widehat{f}(\xi) := \int_{\R^d} e^{i\xi \cdot x}f(x) dx\quad (\xi\in\R^d)$. 
    \item $A\lesssim B$~(resp. $A\gtrsim B$ ) denotes that there exists $C>0$ such that $A \leq CB$~(resp. $A \geq CB$). If $A\lesssim B$ and $A\gtrsim B$ hold, we write $A \sim B$. 
    \item $B_{d}(a,R)$ denotes the $d$-dimensional ball centered at $a\in\R^{d}$ with radius $R$. 

    \item For a function $f$, the symbol $\|f\|_{L^{p}_{y}}$ denotes the $L^{p}$ norm with respect to $y$. 

    \item For a weight $\eta:\R^d \to [0,\infty)$, we define $\|f\|_{L^p(\eta)} := \big(\int_{\mathbb{R}^d} |f(x)|^p \eta(x)\,dx\big)^{1/p}$. We use the weight adapted to $B_{d}(a,R)$:
    \begin{equation*}
        w_{B_{d}(a,R)}(x) := \left(1 + \frac{|x - a|}{R}\right)^{-100d}.
    \end{equation*}

    \item Let $\phi\in C^{\infty}_{0}(\R^{d})$ satisfy $\phi = 1$ on $[0,1]^{d}$ and $\suppm \phi\subset[-\frac{1}{2}, \frac{3}{2}]^{d}$. We use $f_{\square_{k}}$ for the smooth projection adapted to the cube $[R^{-1}k,R^{-1}(k+1)]^{d}$ i.e. 
\begin{equation*}
    f_{\square_{k}}:= \F^{-1}[\phi(R(\cdot - k))\widehat{f}]. 
\end{equation*}
On the other hand, $f_{\widehat{\square}_{k}}$ denotes non-smooth projections:
\begin{equation*}
    f_{\widehat{\square}_{k}}:= \F^{-1}[\chi_{[0,1]^{d}}(R(\cdot - k))\widehat{f}]. 
    \end{equation*}
    As we defined in the definition of modulation spaces, we also use the notation $\square_{k}f$ that denotes the projection adapted to the cube $[k-1, k+ 1]^d$: 
    
    \item $\mathcal{N}_{\delta}(\mathbb{P}^{d})$ denotes the $\delta$-neighborhood of d-dimensional truncated paraboloid:
\begin{equation*}
    \mathcal{N}_{\delta}(\mathbb{P}^{d}) := \{(\xi, |\xi|^{2} +\tau)\in\R^{d + 1} ; |\xi|\leq 1, \thinspace |\tau| \leq \delta\}. 
\end{equation*}

    \item We define the extension operator $\E$ associated to the paraboloid:
\begin{equation*}
    \E f(t,x) := \int_{[0,1]^{d}}e^{i(x\cdot\xi + t|\xi|^{2})}\widehat{f}(\xi)d\xi
\end{equation*}
where $\suppm\widehat{f}\subset B_{d}(0,1)$. 

\item The Littlewood–Paley operators $\{\Delta_j\}_{j\in\mathbb{N}}$ are defined as follows.  
Let $\phi \in C_c^\infty(\mathbb{R}^d)$ be a radial bump function such that 
$\phi(\xi) = 1$ for $|\xi|\le 1$ and $\phi(\xi)=0$ for $|\xi|\ge 2$.  
Set $\psi(\xi) := \phi(\xi) - \phi(2\xi)$.  
Then we define
\begin{equation*}
    \Delta_0 f := \mathcal{F}^{-1}[\phi \, \hat{f}], 
\end{equation*}
and for $j \ge 1$,
\begin{equation*}
    \Delta_j f := \mathcal{F}^{-1}[\psi(2^{-j}\cdot)\, \hat{f}].
\end{equation*}
In particular, $\sum_{j\ge 0}\Delta_j f = f$ in $\mathcal{S}'(\mathbb{R}^d)$.
\end{itemize}

\section{Proof of Theorem \ref{thm:loocalsmoothing}}
\subsection{Proof of the reverse square function estimate}
 To prove Theorem \ref{thm:loocalsmoothing}, we need the reverse square function estimate. 
\begin{prop}
    Let $d = 1$, $f\in \mathcal{S}(\R)$ with $\mathrm{supp}\widehat{f}\subset B_{1}(0,1)$, and $R\geq 1$. Then for each ball $B_{R^2}$ with radius $R^2$, we have
    \begin{equation}
        \|\E f\|_{L^{4}_{t,x}(B_{R^2})} \lesssim\|(\sum_{k\in\Z}|\E f_{\square_{k}}|^{2})^{\frac{1}{2}}\|_{L^{4}_{t,x} (w_{B_{R^2}})}\label{eq:cordoba-fefferman}. 
    \end{equation}
    \label{prop:cordoba-fefferman type est}
\end{prop}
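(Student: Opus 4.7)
The plan is to adapt the classical C\'{o}rdoba--Fefferman orthogonality argument for the parabola in $\R^{1+1}$ to the weighted, spatially localized setting. After squaring, the diagonal terms of $|\E f|^{2}$ produce $\sum_{k}|\E f_{\square_{k}}|^{2}$, and the cross terms decouple via a two-level spacetime-Fourier orthogonality special to the one-dimensional paraboloid.

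\emph{Smooth localization.} I would pick a Schwartz function $\etabr$ on $\R^{2}$ with $\etabr\gtrsim 1$ on $B_{R^{2}}$, pointwise bound $\etabr^{4}\lesssim w_{B_{R^{2}}}$, and $\widehat{\etabr^{2}}$ supported in a ball of radius $\lesssim R^{-2}$; one can take $\etabr=|\psi|^{2}$ for $\psi$ a rescaling of a fixed Schwartz bump whose Fourier transform is compactly supported. Then $\|\E f\|_{L^{4}(B_{R^{2}})}^{2}\leq\bigl\||\E f|^{2}\etabr^{2}\bigr\|_{L^{2}(\R^{2})}$, so it suffices to bound the right-hand side.

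\emph{Two-level Fourier orthogonality.} I would decompose
\[
|\E f|^{2}\etabr^{2}=\sum_{\ell\in\Z}T_{\ell},\qquad T_{\ell}:=\sum_{k}\E f_{\square_{k}}\,\overline{\E f_{\square_{k-\ell}}}\,\etabr^{2},
\]
and parametrize $\xi_{1}\in[k/R,(k+1)/R]$, $\xi_{2}\in[(k-\ell)/R,(k-\ell+1)/R]$. A direct computation shows that the spacetime-Fourier support of each summand, after the $R^{-2}$-thickening coming from convolution with $\widehat{\etabr^{2}}$, lies in a parallelogram of $\xi$-width $\lesssim R^{-1}$ centred at $\xi=\ell/R$ and $\tau$-thickness $\lesssim(|\ell|+1)R^{-2}$ centred near $\tau=\ell(2k-\ell)/R^{2}$. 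This yields two orthogonalities by Plancherel: (i) distinct $T_{\ell}$ live in vertical strips of $\xi$-width $\sim R^{-1}$ with bounded overlap; (ii) for fixed $\ell\neq 0$ the summands of $T_{\ell}$ shift by $\sim\ell/R^{2}$ in $\tau$ between consecutive $k$, matching the parallelogram thickness, so they also have bounded overlap; and for $\ell=0$ one has directly $T_{0}=\bigl(\sum_{k}|\E f_{\square_{k}}|^{2}\bigr)\etabr^{2}$. Plancherel then gives
\[
\bigl\||\E f|^{2}\etabr^{2}\bigr\|_{L^{2}}^{2}\lesssim\sum_{k,k'}\int|\E f_{\square_{k}}|^{2}|\E f_{\square_{k'}}|^{2}\etabr^{4}=\int\Bigl(\sum_{k}|\E f_{\square_{k}}|^{2}\Bigr)^{\!2}\etabr^{4},
\]
and the pointwise bound $\etabr^{4}\lesssim w_{B_{R^{2}}}$ together with $\etabr\gtrsim 1$ on $B_{R^{2}}$ closes the argument after taking a square root.

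\emph{Main obstacle.} The delicate point is the bookkeeping in the two-level orthogonality: one has to verify that the $R^{-2}$-scale thickening from $\widehat{\etabr^{2}}$ and the $k$-dependent tilt of the parallelograms (inherited from the tangent slope $2\xi=2k/R$ of the paraboloid) are absorbed into their intrinsic $R^{-2}$ scale, so that the bounded-overlap counts are uniform in $k$ and $\ell$. Once that is checked, the geometric content reduces to the classical one-dimensional C\'{o}rdoba picture at $L^{4}$, without any need for $\ell^{2}$-decoupling or bilinear input.
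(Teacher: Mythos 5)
Your overall strategy is the classical C\'ordoba $L^{4}$ biorthogonality argument with a Fourier--compactly--supported weight, which is precisely what the paper does in its Appendix A (the paper also gives a second, softer proof via Lemma \ref{lem:sufficientpart}, reducing to the classical non-smooth C\'ordoba--Fefferman estimate and vector-valued Hilbert transform boundedness). The organization of the bookkeeping, however, is different: the paper splits pairs $(k,\tilde k)$ into ``near'' ($\mathrm{dist}(\tsquare_{k},\tsquare_{\tilde k})\lesssim R^{-1}$) and ``far'', treats the near pairs by a trivial Cauchy--Schwarz in the finitely many $\tilde k$, and runs the Fourier-support overlap count only for the far pairs; you instead group by the difference index $\ell=k-\tilde k$, handle $\ell=0$ explicitly, and claim bounded overlap for \emph{all} $\ell\neq 0$.

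That last claim is where the argument has a genuine gap. Your step (ii) asserts that for every $\ell\neq 0$ the thickened supports $\supp\F\bigl[\E f_{\square_{k}}\overline{\E f_{\square_{k-\ell}}}\,\etabr^{2}\bigr]$ have overlap $O(1)$ in $k$. This is correct for $|\ell|\gtrsim 1$ (large enough to keep $\sigma=\xi_{1}-\xi_{2}$ bounded below by $\gtrsim R^{-1}$), but it fails for the smallest nonzero $|\ell|$. Take $\ell=1$ and the non-overlapping parametrization $\xi_{1}\in[k/R,(k+1)/R]$, $\xi_{2}\in[(k-1)/R,k/R]$ you propose: the un-thickened support $P_{k,1}$ contains the point $(0,0)$ (attained at $\xi_{1}=\xi_{2}=k/R$) for \emph{every} admissible $k$, and near $(\sigma,\tau)=(0,0)$ the set $P_{k,1}$ is a thin wedge along the line $\tau=2(k/R)\sigma$ of $\tau$-thickness $\sim\sigma^{2}$. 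After convolving with $\widehat{\etabr^{2}}$, each such wedge is fattened to contain a fixed $R^{-2}$-ball around the origin, so that $P_{k,1}+B(0,R^{-2})$ contains $B(0,R^{-2})$ for all $\sim R$ choices of $k$: the overlap there is $\sim R$, not $O(1)$, and the almost-orthogonality inequality $\|T_{1}\|_{L^{2}}^{2}\lesssim\sum_{k}\|\E f_{\square_{k}}\overline{\E f_{\square_{k-1}}}\etabr^{2}\|_{L^{2}}^{2}$ is no longer justified by Fourier support considerations. (With the paper's smooth, overlapping projections $\square_{k}$ the same problem affects $|\ell|\le 2$ or so.) The informal heuristic ``shift $\sim\ell/R^{2}$ matches the thickness'' breaks precisely because the $R^{-2}$-thickening from $\widehat{\etabr^{2}}$ dominates the intrinsic thickness when $|\sigma|\ll R^{-1}$.

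The fix is exactly the paper's near/far split: treat the boundedly many small offsets $|\ell|\le L_{0}$ by pointwise Cauchy--Schwarz,
\begin{equation*}
\Bigl|\sum_{|\ell|\le L_{0}}T_{\ell}\Bigr|\lesssim\sum_{k}\sum_{|\tilde k-k|\le L_{0}}|\E f_{\square_{k}}|\,|\E f_{\square_{\tilde k}}|\,\etabr^{2}\lesssim_{L_{0}}\Bigl(\sum_{k}|\E f_{\square_{k}}|^{2}\Bigr)\etabr^{2},
\end{equation*}
and reserve the two-level almost-orthogonality argument for $|\ell|>L_{0}$, where $|\sigma|\gtrsim R^{-1}$ holds uniformly and your overlap count goes through. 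With that modification the proposal is correct and matches the Appendix A proof in substance.
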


\begin{rem}
    If $\square_{k}$'s are replaced by non-smooth projections $\widehat{\square}_{k}$, this proposition is a consequence of Theorem 1.2 in \cite{gressman2021reversing}. Using the boundedness of the vector-valued Hilbert transform, we can deduce the smooth version from the non-smooth version. However, here we will provide another proof of this proposition. Moreover, we will also provide a direct proof of this proposition in Appendix A.
\end{rem}

Here we recall the classical C\'{o}rdoba-Fefferman estimate (for example, see \cite{cordoba1979}, \cite{fefferman}, and Chapter 3 in \cite{demeter2020fourier}). 
\begin{prop}
    Let $R\geq 1$. For any $F\in\mathcal{S}(\R^2)$ with $\suppm \widehat{F}\subset \mathcal{N}_{R^{-2}}(\mathbb{P}^{1})$. Then, we have
    \begin{equation*}
        \|F\|_{L^{4}(\R^{2})} \lesssim \|(\sum_{k\in\Z}|F_{\widehat{\square}_{k}}|^{2})^{\frac{1}{2}}\|_{L^{4}(\R^2)}
    \end{equation*}
    \label{prop:classical cordoba fefferman}
\end{prop}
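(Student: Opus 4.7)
The plan is to follow the classical Córdoba-Fefferman bilinearization, which exploits the curvature of the parabola. By Plancherel one has $\|F\|_{L^4(\R^2)}^2 = \|F\overline{F}\|_{L^2(\R^2)}$, so it suffices to bound $\|F\overline F\|_{L^2}$ by $\|\sum_k|F_{\widehat\square_k}|^2\|_{L^2} = \|(\sum_k|F_{\widehat\square_k}|^2)^{1/2}\|_{L^4}^2$. Expanding $F=\sum_k F_{\widehat\square_k}$ gives $F\overline{F} = \sum_{j,k\in\Z} F_{\widehat\square_j}\overline{F_{\widehat\square_k}}$, so the task reduces to an almost-orthogonality estimate for these bilinear pieces.

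The main step is a Fourier-support analysis. The Fourier transform of $F_{\widehat\square_j}\overline{F_{\widehat\square_k}}$ is supported in the Minkowski difference $\suppm\widehat{F_{\widehat\square_j}} - \suppm\widehat{F_{\widehat\square_k}}$; parametrizing points of $\mathcal{N}_{R^{-1}}(\mathbb{P}^1)$ as $(\xi,\xi^2+\tau)$ with $|\tau|\leq R^{-1}$, this difference localizes in an $O(R^{-1})\times O(R^{-1})$ box centered near $((j-k)/R,(j^2-k^2)/R^2)$. Organizing pairs by the diagonal index $m=j-k$, distinct values of $m$ give essentially disjoint first coordinates, and within each fixed $m\neq 0$ the distinct values of $j+k$ shift the second coordinate by roughly $|m|/R^2$ per step thanks to the parabola's curvature. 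After a standard partition-of-pairs argument to arrange bounded overlap, Plancherel yields
\[
\|F\overline F\|_{L^2}^2 \lesssim \sum_{j,k}\|F_{\widehat\square_j}\overline{F_{\widehat\square_k}}\|_{L^2}^2 = \sum_{j,k}\int |F_{\widehat\square_j}|^2|F_{\widehat\square_k}|^2 = \Big\|\sum_k|F_{\widehat\square_k}|^2\Big\|_{L^2}^2,
\]
and taking square roots twice completes the proof.

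The main obstacle is enforcing the bounded overlap in the small-$|m|$ regime, where the curvature-induced separation $|m|/R^2$ is smaller than the $O(R^{-1})$ thickness of each support box. The classical remedy is to isolate the diagonal $m=0$ contribution, which is exactly $\sum_k|F_{\widehat\square_k}|^2$, and to absorb the near-diagonal off-diagonal terms via the pointwise bound $|F_{\widehat\square_j}||F_{\widehat\square_k}|\leq\tfrac12(|F_{\widehat\square_j}|^2+|F_{\widehat\square_k}|^2)$, combined with the genuine orthogonality available for pairs of sufficiently large $|j-k|$. This balance between parabolic curvature and $R^{-1}$-thickness is the essential content of the Córdoba-Fefferman inequality in the plane.
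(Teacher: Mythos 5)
Your overall strategy — reduce to $\|F\overline F\|_{L^2}$, expand over pairs, and exploit the parabola's curvature for almost orthogonality of the bilinear pieces — is exactly the Córdoba–Fefferman mechanism, and it is also the engine behind the direct proof of the extension-operator version (Proposition~\ref{prop:cordoba-fefferman type est}) in Appendix~A. But your own support analysis exposes a scale mismatch that the proposed remedy cannot repair. With an $R^{-1}$-thick neighbourhood and $R^{-1}$-wide strips $\widehat\square_k$, the set $\theta_j-\theta_k$ (where $\theta_k = \widehat\square_k\cap\mathcal N_{R^{-1}}(\mathbb P^1)$) has second-coordinate extent of order $R^{-1}$, as you observe, while for fixed $m=j-k\ne 0$ the centres of $\theta_j-\theta_{j-m}$ shift by only $\sim |m|/R^2$ as $j$ increments; the overlap count is therefore $\sim R/|m|$, which is unbounded for $1\le |m|\ll R$. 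Cauchy–Schwarz/AM–GM handles $|m|=O(1)$ and genuine almost orthogonality only kicks in once $|m|\gtrsim R$, so the entire intermediate range $1\ll|m|\ll R$ is uncontrolled. No clever partition of pairs can fix this, because at this scale the inequality is actually \emph{false}: take $\widehat F=\chi_{[0,R^{-1/2}]\times[0,R^{-1}]}$, which sits inside $\mathcal N_{R^{-1}}(\mathbb P^1)$ (the parabola deviates by at most $R^{-1}$ over $[0,R^{-1/2}]$). Then all $|F_{\widehat\square_k}|$, $0\le k<R^{1/2}$, coincide with $|\F^{-1}\chi_{[0,R^{-1}]^2}|$, and an elementary computation gives $\|F\|_{L^4}\sim R^{-9/8}$ while $\|(\sum_k|F_{\widehat\square_k}|^2)^{1/2}\|_{L^4}\sim R^{-5/4}$, a loss of $R^{1/8}$.

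The resolution is that classical Córdoba–Fefferman is a statement about $\delta^{1/2}$-caps in a $\delta$-neighbourhood; with caps of width $R^{-1}$ this forces $\delta=R^{-2}$. That is indeed the thickness used in Appendix~A (``supported on a $R^{-2}$-neighborhood of a paraboloid on a line segment with length $\lesssim R^{-1}$''), and it is the thickness actually produced when Lemma~\ref{lem:sufficientpart} localises $\E f$ to a ball of radius $R^2$; the $R^{-1}$ appearing in the statement of Proposition~\ref{prop:classical cordoba fefferman} should be read as $R^{-2}$. With $\mathcal N_{R^{-2}}$ your argument closes with no ad hoc repair: for fixed first coordinate $\zeta_1\approx m/R$ with $m\ne 0$, the $\tau$-spread contributes only $O(R^{-2})$ to the $\zeta_2$-extent of $\theta_j-\theta_k$, the spread of $\xi^2-\xi'^2=\zeta_1(\xi+\xi')$ over the width-$R^{-1}$ strips contributes $\sim|m|/R^2$, and both are comparable to the separation $\sim|m|/R^2$ between consecutive supports of the same diagonal index $m$, so the overlap is $O(1)$ uniformly in $m\ne 0$. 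The diagonal $m=0$ is exactly $\sum_k|F_{\widehat\square_k}|^2$, and your Plancherel display
\begin{equation*}
\|F\overline F\|_{L^2}^2\ \lesssim\ \sum_{j,k}\|F_{\widehat\square_j}\overline{F_{\widehat\square_k}}\|_{L^2}^2\ =\ \Bigl\|\sum_k|F_{\widehat\square_k}|^2\Bigr\|_{L^2}^2
\end{equation*}
then holds with an absolute constant. (The paper does not itself give a proof of Proposition~\ref{prop:classical cordoba fefferman}; it cites it as classical, and Appendix~A carries out the analogous overlap argument for $\E f$ with the near/far split $\mathrm{dist}(\tsquare_k,\tsquare_{\tilde k})\lesssim R^{-1}$ versus $\gtrsim R^{-1}$.)
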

We observe that the classical C\'{o}rdoba-Fefferman's estimate implies a reverse square function estimate for the extension operator. Combining the following lemma and Proposition \ref{prop:classical cordoba fefferman}, we obtain Proposition \ref{prop:cordoba-fefferman type est}. 
\begin{lem}
    Let $p\in [2,\infty)$, $s\geq 0$, and $R\geq 1$.  Suppose that the inequality
    \begin{equation}
        \|F\|_{L^{p}(\R^{d + 1})} \lesssim R^s \|(\sum_{k\in\Z^{d}}|F_{\widehat{\square}_{k}}|^{2})^{\frac{1}{2}}\|_{L^{p}(\R^{d+1})}\label{asm:classicalc-f}
    \end{equation}
    holds for any $F\in\mathcal{S}(\R^{d + 1})$ with $\suppm \widehat{F}\subset \mathcal{N}_{R^{-2}}(\mathbb{P}^{1})$. Then, for any ball $B_{R^{2}}\subset\R^{d+1}$ and for any $f\in\mathcal{S}(\R^{d})$ with $\suppm \widehat{f}\subset B_{d}(0,1)$, the estimate
    \begin{equation*}
        \|\E f\|_{L^{p}(B_{R^2})} \lesssim R^{s}\|(\sum_{k\in\Z^{d}}|\E f_{\square_{k}}|^{2})^{\frac{1}{2}}\|_{L^{p}(w_{B_{R^{2}}})}
    \end{equation*}
    holds. 
    \label{lem:sufficientpart}
\end{lem}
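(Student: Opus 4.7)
The plan is to localize $\mathcal{E}f$ to $B := B_{R^{2}}$ by a spacetime bump $\eta_{B}$ with tight Fourier support, apply the hypothesis to $F := \eta_{B}\mathcal{E}f$, and then transfer the resulting square function over $(d+1)$-dimensional cubes back to the $d$-dimensional cube square function on the weighted space. Concretely, I would choose $\eta_{B} = |\phi|^{2}$ with $\phi(y) = \phi_{0}(y/R^{2})$ for a fixed Schwartz function $\phi_{0}$ whose Fourier transform is a smooth bump and for which $\phi_{0} \geq 1$ on the unit ball. Then $\eta_{B} \geq 1$ on $B$, $\widehat{\eta_{B}}$ is compactly supported in $B_{d+1}(0, cR^{-2})$, and $|\eta_{B}|^{p} \lesssim w_{B}$ pointwise (taking $\phi_{0}$ with sufficiently fast decay). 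Since $\widehat{F} = \widehat{\eta_{B}} \ast \widehat{\mathcal{E}f}$ is supported in a $cR^{-2}$-neighborhood of $\mathbb{P}^{d}$, which lies in $\mathcal{N}_{R^{-1}}(\mathbb{P}^{d})$, the hypothesis (\ref{asm:classicalc-f}) applies to $F$ and yields
\begin{equation*}
\|\mathcal{E}f\|_{L^{p}(B)} \leq \|F\|_{L^{p}(\R^{d+1})} \lesssim R^{s} \left\| \Bigl( \sum_{k' \in \Z^{d+1}} |F_{\widehat{\square}_{k'}}|^{2} \Bigr)^{1/2} \right\|_{L^{p}(\R^{d+1})}.
\end{equation*}

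Next I would compare this square function with $\bigl(\sum_{k}|\eta_{B}\mathcal{E}f_{\square_{k}}|^{2}\bigr)^{1/2}$. Writing $F = \sum_{k \in \Z^{d}} \eta_{B}\mathcal{E}f_{\square_{k}}$ via the smooth partition of unity, the Fourier support of $\eta_{B}\mathcal{E}f_{\square_{k}}$ is contained in an $O(R^{-2})$-thickening of the parabolic arc over $\square_{k}$, a set of $\xi$-extent $\sim R^{-1}$ and $\tau$-extent $\lesssim R^{-1}$ (the slope of $|\xi|^{2}$ is at most $2|\xi| \leq 2$). Hence each such arc meets only $O(1)$ cubes $\widehat{\square}_{k'}$ and, conversely, each $\widehat{\square}_{k'}$ meets only $O(1)$ arcs. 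Using Cauchy--Schwarz to absorb the bounded multiplicity, together with the vector-valued $L^{p}(\ell^{2})$-boundedness (valid for $p \geq 2$) of the sharp cube multipliers $\chi_{\widehat{\square}_{k'}}(D)$ on $\R^{d+1}$ (Rubio de Francia, or equivalently iterated Hilbert transforms), one obtains
\begin{equation*}
\left\| \Bigl( \sum_{k'} |F_{\widehat{\square}_{k'}}|^{2} \Bigr)^{1/2} \right\|_{L^{p}(\R^{d+1})} \lesssim \left\| \Bigl( \sum_{k \in \Z^{d}} |\eta_{B}\mathcal{E}f_{\square_{k}}|^{2} \Bigr)^{1/2} \right\|_{L^{p}(\R^{d+1})}.
\end{equation*}

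Finally, the pointwise bound $|\eta_{B}|^{p} \lesssim w_{B}$ pulls the bump into the weight and gives $\|(\sum_{k}|\eta_{B}\mathcal{E}f_{\square_{k}}|^{2})^{1/2}\|_{L^{p}(\R^{d+1})} \lesssim \|(\sum_{k}|\mathcal{E}f_{\square_{k}}|^{2})^{1/2}\|_{L^{p}(w_{B})}$, and chaining the three estimates closes the lemma. The hard part will be the middle comparison step: reconciling the sharp cube decomposition in $\R^{d+1}$ demanded by the hypothesis with the smooth cube decomposition in $\R^{d}$ appearing in the conclusion. The geometric bounded-overlap statement is straightforward, but cleanly absorbing the overlap into an $L^{p}$ estimate relies on the vector-valued boundedness of sharp cube Fourier multipliers, which is valid only for $1 < p < \infty$, and is therefore compatible with the range $p \geq 2$.
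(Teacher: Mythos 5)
Your proof is correct and follows essentially the same approach as the paper's: localize $\mathcal{E}f$ with a bump whose Fourier transform is supported at scale $R^{-2}$, apply the assumed square function estimate, transfer the sharp square function back to the smooth one $(\sum_k|\mathcal{E}f_{\square_k}|^2)^{1/2}$ via bounded overlap of the Fourier supports together with vector-valued boundedness of sharp frequency projections, and absorb the bump into the weight $w_{B_{R^2}}$. The only cosmetic differences are that the paper multiplies by $\eta_{B_{R^2}}^{1/p}$ rather than by $\eta_{B_{R^2}}$ (so that the $p$-th power directly gives the weight), and organizes the transfer step through the identity $(\mathcal{E}f\cdot\eta^{1/p})_{\square_k} = (\mathcal{E}f_{\square_k'}\cdot\eta^{1/p})_{\square_k}$ followed by the vector-valued Hilbert transform, rather than your Cauchy--Schwarz plus bounded-multiplicity bookkeeping; both amount to the same thing.
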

\begin{proof}
    Let $\etabr\in \mathcal{S}(\R^{d+1})$ satisfy
    \begin{align*}
        \begin{cases}
            \suppm\F[\etabr^{\frac{1}{p}}] \subset B_{d +1}(0,R^{-2})\\
            \etabr \gtrsim 1 \thinspace \mathrm{on}\thinspace B_{d + 1}(0,R^{2})
        \end{cases}
    \end{align*}
    (This function is constructed by using a bump function $\theta$ whose Fourier support is contained in $B_{2}(0,1/2)$ and that satisfies $\ge 1$ on $B_{s}(0,1)$ in the physical space. Then one may consider $(\theta \cdot \overline{\theta})^{4}$ and apply a suitable rescaling). 
    We use the notation $\mathcal{F}_{t,x}$ for the Fourier transform with respect to $t$ and $x$. Note that $\suppm \F_{t,x}[\E f\cdot \etabr^{\frac{1}{p}}]$ is contained in a $R^{-2}$-neighborhood of the compact paraboloid. Thus, we can apply (\ref{asm:classicalc-f}) to this. It follows that
    \begin{equation}
        \|\E f\cdot \etabr^{\frac{1}{p}}\|_{L^{p}_{t,x}(\R^{d + 1})} \lesssim_{\varepsilon} R^{s}\|(\sum_{k\in\Z^{d}}|(\E f\cdot \etabr^{\frac{1}{p}})_{\widehat{\square}_{k}}|^{2})^{\frac{1}{2}}\|_{L_{t,x}^{p}(\R^{d+1})}.\label{eq:forprop1} 
    \end{equation}
    Let $\square_{k} '$ denote the sum of $\square_{k}$ and the squares adjacent to $\square_{k}$ (i.e. $\square'_{k} = \bigcup_{\substack{m\in \mathbb{Z}^d \\ \| m-k\|_{\ell^{\infty}} \le 1}}\square_{m}$). Then \begin{equation*}
        (\E f\etabr^{\frac{1}{p}})_{\square_{k}} = (\E f_{\square_{k} '}\etabr^{\frac{1}{p}})_{\square_{k}}
    \end{equation*}
    holds. Therefore, from the boundedness of the vector-valued Hilbert transform (for example, see Section 4.5 in \cite{grafakos_modern}), we obtain
    \begin{align*}
        \|(\sum_{k\in\Z^{d}}|(\E f\cdot \etabr^{\frac{1}{p}})_{\square_{k}}|^{2})^{\frac{1}{2}}\|_{L_{t,x}^{p}(\R^{d+1})} &= \|(\sum_{k\in\Z^{d}}|(\E f_{\square_{k} '}\cdot \etabr^{\frac{1}{p}})_{\square_{k}}|^{2})^{\frac{1}{2}}\|_{L_{t,x}^{p}(\R^{d+1})}\\
        &\lesssim  \|(\sum_{k\in\Z^{d}}|\E f_{\square_{k}}\cdot \etabr^{\frac{1}{p}}|^{2})^{\frac{1}{2}}\|_{L_{t,x}^{p}(\R^{d+1})}.
    \end{align*}
    Combining this and (\ref{eq:forprop1}), we get the desired result.   
\end{proof}

\subsection{Proof of the bilinear Strichartz estimate}

We will also use a kind of bilinear Strichartz estimate in the proof of Theorem \ref{thm:loocalsmoothing}. Here, we prove the estimate below by following Tao's argument in \cite{tao2010physical}. 
\begin{prop}
    Let $k_{1}, k_{2}\in \Z$ be with $|k_{1} - k_{2}| \geq 3$. For $f,g\in \mathcal{S}(\R)$ with $\suppm\widehat{f}\subset [k_{1}-1, k_{1}+1]$ and $\suppm\widehat{g}\subset [k_{2}-1, k_{2} + 1]$, we have the following inequality:
    \begin{equation*}
        \|[e^{it\Delta}f][e^{it\Delta}g]\|_{L^{2}_{t,x}(\R\times\R)}\lesssim \ev{k_{1} - k_{2}}^{-\frac{1}{2}}\|f\|_{L^{2}_{x}(\R)}\|g\|_{L^{2}_{x}(\R)}. \label{thm:bilinear}
    \end{equation*}
\end{prop}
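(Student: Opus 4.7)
The plan is to adapt the classical Plancherel-plus-change-of-variables argument (following Tao) to the bilinear setting: after expanding both propagators in Fourier and changing variables in frequency, the estimate reduces to a weighted $L^2$ inequality whose weight is the Jacobian of the change of variables on the product paraboloid, and the frequency separation $|k_1-k_2|\geq 3$ gives a uniform lower bound for that Jacobian.

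First I would write
\begin{equation*}
    [e^{it\Delta}f][e^{it\Delta}g](t,x) = \iint_{\R^2} e^{i(x(\xi_1+\xi_2) + t(\xi_1^2+\xi_2^2))}\widehat{f}(\xi_1)\widehat{g}(\xi_2)\,d\xi_1 d\xi_2
\end{equation*}
and introduce the frequency change of variables $\eta := \xi_1+\xi_2$, $\tau := \xi_1^2+\xi_2^2$, whose Jacobian equals $2|\xi_1-\xi_2|$. The hypothesis $|k_1-k_2|\geq 3$, combined with $\suppm\widehat{f}\subset[k_1-1,k_1+1]$ and $\suppm\widehat{g}\subset[k_2-1,k_2+1]$, makes the two supports disjoint and guarantees $|\xi_1-\xi_2|\gtrsim\langle k_1-k_2\rangle$ uniformly on the product support. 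Although $(\xi_1,\xi_2)\mapsto(\eta,\tau)$ is generically two-to-one via the symmetry $(\xi_1,\xi_2)\leftrightarrow(\xi_2,\xi_1)$, disjointness of the supports eliminates the ambiguity, so the map is a diffeomorphism from $\suppm\widehat{f}\times\suppm\widehat{g}$ onto its image.

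Rewriting the double integral as an inverse spacetime Fourier transform,
\begin{equation*}
    [e^{it\Delta}f][e^{it\Delta}g](t,x) = \iint e^{i(x\eta+t\tau)} \frac{\widehat{f}(\xi_1(\eta,\tau))\widehat{g}(\xi_2(\eta,\tau))}{2|\xi_1-\xi_2|}\,d\eta d\tau,
\end{equation*}
applying Plancherel in $(t,x)$, and then changing variables back to $(\xi_1,\xi_2)$, yields
\begin{equation*}
    \|[e^{it\Delta}f][e^{it\Delta}g]\|_{L^2_{t,x}}^2 \sim \iint_{\suppm\widehat{f}\times\suppm\widehat{g}} \frac{|\widehat{f}(\xi_1)|^2 |\widehat{g}(\xi_2)|^2}{|\xi_1-\xi_2|}\,d\xi_1 d\xi_2.
\end{equation*}
Substituting the pointwise lower bound $|\xi_1-\xi_2|\gtrsim\langle k_1-k_2\rangle$ and using Plancherel on each factor separately finishes the proof.

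I do not expect any genuine obstacle: the only technical care required is verifying injectivity of $(\xi_1,\xi_2)\mapsto(\eta,\tau)$ on the product support (handled by disjointness) and tracking the change-of-variables constants, both routine in one spatial dimension. The essential mechanism — transversality between the two frequency-localized pieces of the paraboloid forcing a Jacobian lower bound proportional to $\langle k_1-k_2\rangle$ — is exactly what the gap condition $|k_1-k_2|\geq 3$ is there to guarantee, and no further harmonic-analytic input (decoupling, Kakeya, etc.) is needed.
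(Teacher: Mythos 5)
Your proof is correct, and it takes a genuinely different route from the paper's. You use the classical Fourier-side argument: expand both propagators, change variables $(\xi_1,\xi_2)\mapsto(\eta,\tau)=(\xi_1+\xi_2,\,\xi_1^2+\xi_2^2)$, note the Jacobian is $2|\xi_1-\xi_2|$ (with injectivity secured by disjointness of the two frequency intervals), apply Plancherel, and change variables back so that the weight $|\xi_1-\xi_2|^{-1}\lesssim\langle k_1-k_2\rangle^{-1}$ produces the gain. The paper instead follows Tao's physical-space argument: it lifts to the two-dimensional evolution $U(t,x,y)=e^{it\Delta}f(x)\,e^{it\Delta}g(y)$, observes that the Fourier support lies at distance $\sim\langle k_1-k_2\rangle$ from the diagonal direction $\omega=(1/\sqrt 2,-1/\sqrt 2)$, and invokes Tao's directional local smoothing estimate (Corollary 3.4 of \cite{tao2010physical}) to bound the restriction of $U$ to the line $x=y$. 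These are the two standard proofs of the bilinear Strichartz estimate, and in fact Tao's paper \cite{tao2010physical} was written precisely to give the physical-space alternative to the Plancherel-plus-change-of-variables argument you reproduce. Your version is more self-contained (it needs no auxiliary lemma), whereas the paper's version is consistent with the black-box it already imports and generalizes more readily to settings where direct Fourier computation is less convenient. One small bookkeeping point worth keeping explicit when you write this up: after Plancherel the weight is $|2(\xi_1-\xi_2)|^{-2}$, and one factor of $2|\xi_1-\xi_2|$ is reabsorbed by the reverse change of variables, leaving a single power $|\xi_1-\xi_2|^{-1}$ in the final integrand, which is what yields the exponent $-\tfrac12$ after taking square roots.
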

To show this, we use the following local smoothing estimate.
\begin{prop}[{\cite[Corollary 3.4]{tao2010physical}}]
    Let $d\geq 1$, let $\omega$ be a unit vector in $\R^d$, and let $f\in \mathcal{S}(\R^d)$ with $\suppm\widehat{f}\subset\{\xi\in\R^{d}\thinspace;\thinspace |\xi\cdot\omega| \sim N\}$ for some $N >0$. Then \begin{equation}
        (\int_{\R}\int_{x\cdot\omega = 0}|e^{it\Delta}f(x)|^{2}dxdt)^{\frac{1}{2}} \lesssim N^{-\frac{1}{2}}\|f\|_{L^{2}_{x}(\R^d)}
    \end{equation}
    where $dx$ is the $(d-1)$-dimensional Lebesgue measure on the hyperplane $\{x\in\R^d \thinspace;\thinspace x\cdot\omega = 0\}$. \label{thm:taosmoothing}
\end{prop}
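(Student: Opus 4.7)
The plan is to trade the $L^{2}$ norm on the spacetime hyperplane $\{x\cdot\omega = 0\}\times\R$ for an $L^{2}$ norm on the Fourier side via Plancherel, and then to use the co-area formula to convert the surface measure on the paraboloid into Lebesgue measure on the frequency support, paying a Jacobian factor of $|\xi\cdot\omega|^{-1}\lesssim N^{-1}$. By the rotation invariance of both $e^{it\Delta}$ and the hypothesis on $\suppm\widehat{f}$, I may assume $\omega = e_{1}$, so the claim reduces to showing
\begin{equation*}
\left(\int_{\R}\int_{\R^{d-1}}|e^{it\Delta}f(0,x')|^{2}\,dx'\,dt\right)^{1/2}\lesssim N^{-1/2}\|f\|_{L^{2}}
\end{equation*}
whenever $\widehat{f}$ is supported in $\{|\xi_{1}|\sim N\}$.

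Setting $g(t,x') := e^{it\Delta}f(0,x')$, the first step is to identify its space-time Fourier transform. From
\begin{equation*}
g(t,x') = \int_{\R^{d-1}} e^{ix'\cdot\xi'}\left(\int_{\R} e^{it(\xi_{1}^{2}+|\xi'|^{2})}\widehat{f}(\xi_{1},\xi')\,d\xi_{1}\right)d\xi',
\end{equation*}
the co-area identity yields, up to harmless normalization constants,
\begin{equation*}
\widehat{g}(\tau,\xi') = \sum_{\pm}\frac{\widehat{f}(\pm\sqrt{\tau-|\xi'|^{2}},\xi')}{2\sqrt{\tau-|\xi'|^{2}}}\,\mathbbm{1}_{\tau>|\xi'|^{2}}.
\end{equation*}
The frequency localization $|\xi_{1}|\sim N$ keeps $\xi_{1}=0$ outside $\suppm\widehat{f}$, so the two sheets $\pm\sqrt{\tau-|\xi'|^{2}}$ sit in disjoint regions and can be treated separately. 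Applying Plancherel in $(\tau,\xi')$ and then changing variables from $\tau$ back to $\xi_{1}$ via $\tau = \xi_{1}^{2}+|\xi'|^{2}$, $d\tau = 2|\xi_{1}|\,d\xi_{1}$, turns the weight $(2\sqrt{\tau-|\xi'|^{2}})^{-2}\cdot 2|\xi_{1}| = (2|\xi_{1}|)^{-1}$, which is $\lesssim N^{-1}$ on the support of $\widehat{f}$. This gives $\|g\|_{L^{2}(\R\times\R^{d-1})}^{2}\lesssim N^{-1}\|\widehat{f}\|_{L^{2}}^{2}$, and hence the desired inequality after one more Plancherel in $\R^{d}$.

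The only delicate point is the distributional handling of the two $\delta$-functions implicit in the above calculation. I would make this rigorous by viewing $g$ directly as the inverse $(\tau,\xi')$-Fourier transform of the pushforward of $\widehat{f}(\xi_{1},\xi')/(2|\xi_{1}|)$ onto the paraboloid $\{\tau = \xi_{1}^{2}+|\xi'|^{2}\}$; then Plancherel and a single change of variables do all the work, and no novel estimate is needed beyond this Fourier-theoretic repackaging. This is essentially the trace-on-paraboloid computation familiar from the proof of the Strichartz estimate.
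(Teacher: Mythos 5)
The paper does not supply a proof of this proposition: it is quoted verbatim as Corollary 3.4 from Tao's notes \cite{tao2010physical}, where it is derived by a \emph{physical-space} argument (differentiating a momentum-flux/virial-type quantity in time, with no space-time Fourier transform). Your Plancherel-based proof is therefore a genuinely different route from the cited source; it is the classical Fourier-side derivation, and it is correct modulo one small point of justification.

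That point is the assertion that the two branches $\xi_1=\pm\sqrt{\tau-|\xi'|^2}$ ``sit in disjoint regions and can be treated separately.'' They do not: as subsets of $(\tau,\xi')$-space both branches lie over $\{\tau>|\xi'|^2\}$, so $|\widehat{g}(\tau,\xi')|^2$ genuinely contains the cross term $\widehat{f}\bigl(\sqrt{\tau-|\xi'|^2},\xi'\bigr)\,\overline{\widehat{f}\bigl(-\sqrt{\tau-|\xi'|^2},\xi'\bigr)}$, and both arguments can lie in $\{|\xi_1|\sim N\}$. The standard repair is to split $\widehat f=\widehat f_{+}+\widehat f_{-}$ according to the sign of $\xi_1$ (a harmless operation here because $\xi_1$ is bounded away from $0$ on $\suppm\widehat f$), estimate each piece separately --- for which only one branch appears and the computation proceeds exactly as you wrote, with $(2|\xi_1|)^{-2}\cdot 2|\xi_1|=(2|\xi_1|)^{-1}\lesssim N^{-1}$ --- and then sum; alternatively just invoke $|a+b|^2\le 2|a|^2+2|b|^2$. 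With that fix the argument is complete. As for the comparison: your Fourier-side proof makes the $N^{-1}$ Jacobian factor completely transparent, whereas Tao's physical-space argument (the one the paper actually imports) avoids the space-time Fourier transform and is built to extend to the bilinear and variable-coefficient settings treated in that reference.
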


\begin{proof}[Proof of Proposition \ref{thm:bilinear}]
    Define the function $U(t,x,y)\thinspace:\thinspace\R\times\R\times\R\rightarrow\R$ as follows:
    \begin{equation*}
        U(t,x,y) := [e^{it\Delta}f(x)][e^{it\Delta}g(y)]. 
    \end{equation*}
    This solves the $2$-dimensional Schrödinger equation
    \begin{equation*}
        iU_{t} + \Delta_{x,y}U = 0
    \end{equation*}
    with an initial datum whose Fourier transform is supported in the region
    \begin{equation*}
        \{(\xi_{1},\xi_{2})\in\R^2\thinspace;\thinspace k_{1}-1\leq \xi_{1}\leq k_{1} + 1,\thinspace k_{2}-1\leq \xi_{2}\leq k_{2} + 1\}. 
    \end{equation*}
    Set a unit vector $\omega := (\frac{1}{\sqrt{2}}, -\frac{1}{\sqrt{2}})$. Taking the inner product $\omega$ and $(\xi_{1}, \xi_{2})\in\R^{2}$, we have
    \begin{equation*}
        |(\xi_{1}, \xi_{2})\cdot\omega| = \frac{1}{\sqrt{2}}|\xi_{1} - \xi_{2}|. 
    \end{equation*}
    We may assume that $k_{1} \geq k_{2}$. Then we have the following:
    \begin{equation*}
        k_{1}-k_{2} - 2 \leq |(\xi_{1}, \xi_{2})\cdot\omega| \leq k_{1}-k_{2} + 2. 
    \end{equation*}
    Since $|k_{1} -k_{2}| \geq 3$, we have $|(\xi_{1}, \xi_{2})\cdot\omega|\sim\ev{k_{1}-k_{2}}$. Applying Proposition \ref{thm:taosmoothing} to $U(t,x,y)$, we get
    \begin{equation*}
        \|U(t,x,x)\|_{L^{2}_{t,x}(\R\times\R)} \lesssim \ev{k_{1} -k_{2}}^{-\frac{1}{2}}\|U(0,x,y)\|_{L^{2}_{x,y}(\R^2)}. 
    \end{equation*}
    This inequality coincides with the desired one. 
\end{proof}

\subsection{Proof of Theorem \ref{thm:loocalsmoothing}}
The key lemma to show the main result is the following:
\begin{lem}
    Let $\lam \geq 1$. For any $\varepsilon \in (0,1)$ and for any $f\in \mathcal{S}'$ with $\suppm\widehat{f}\subset\{\xi\in\R\thinspace ; \thinspace |\xi| \leq \lam\}$, the following inequality holds:
    \begin{equation}
        \|e^{it\Delta}f\|_{L^{4}_{t,x}(I\times\R)} \lesssim \|f\|_{M_{2,4-\varepsilon}(\R)}. \label{eq:presmoothing}
    \end{equation}
    \label{lem:mainprooflemma}
\end{lem}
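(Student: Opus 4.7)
My plan is to combine the parabolically-rescaled reverse square function inequality of Proposition \ref{prop:cordoba-fefferman type est} with the bilinear Strichartz estimate of Proposition \ref{thm:bilinear}, using a standard diagonal/off-diagonal decomposition of the resulting double sum.

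The first step is to apply parabolic rescaling to Proposition \ref{prop:cordoba-fefferman type est} with $R=\lam$. The rescaled statement reads: for $f$ with $\suppm\widehat{f}\subset[-\lam,\lam]$ and any $x_{0}\in\R$,
\begin{equation*}
    \|e^{it\Delta}f\|_{L^{4}(I\times B_{1}(x_{0},\lam))}\lesssim\left\|\left(\sum_{k\in\Z}|e^{it\Delta}f_{\square_{k}}|^{2}\right)^{1/2}\right\|_{L^{4}(w_{x_{0}})},
\end{equation*}
where the $\square_{k}$ are now the unit-scale modulation cubes of the preliminaries and the weight satisfies $w_{x_{0}}(t,x)\lesssim(1+|t|+|x-x_{0}|/\lam)^{-200}$. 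Tiling $\R$ by the translates $x_{j}:=j\lam$, $j\in\Z$, and summing fourth powers gives
\begin{equation*}
    \|e^{it\Delta}f\|_{L^{4}(I\times\R)}^{4}\lesssim\int_{\R^{2}}\left(\sum_{k}|e^{it\Delta}f_{\square_{k}}|^{2}\right)^{2}W(t,x)\,dt\,dx,
\end{equation*}
since the summed weight $W:=\sum_{j}w_{x_{j}}$ is readily checked to satisfy $W(t,x)\lesssim(1+|t|)^{-199}$ on $\R^{2}$.

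The second step is to expand the inner square and split the double sum into the near-diagonal regime $|k_{1}-k_{2}|\leq 3$ and the separated regime $|k_{1}-k_{2}|\geq 4$. Write $a_{k}:=\|f_{\square_{k}}\|_{L^{2}}$. In the separated regime, Proposition \ref{thm:bilinear} (squared) gives
\begin{equation*}
    \int_{\R^{2}}|e^{it\Delta}f_{\square_{k_{1}}}|^{2}|e^{it\Delta}f_{\square_{k_{2}}}|^{2}\,dt\,dx\lesssim\ev{k_{1}-k_{2}}^{-1}a_{k_{1}}^{2}a_{k_{2}}^{2},
\end{equation*}
so this portion is bounded by $\sum_{k_{1},k_{2}}\ev{k_{1}-k_{2}}^{-1}a_{k_{1}}^{2}a_{k_{2}}^{2}$. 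Viewing this as an $\ell^{p}$--$\ell^{p'}$ pairing with $b_{k}:=a_{k}^{2}$ and applying Young's convolution inequality on $\Z$, the choice $p=(4-\varepsilon)/2$ forces $1/s=2-2/p=(4-2\varepsilon)/(4-\varepsilon)<1$, hence $\ev{\cdot}^{-1}\in\ell^{s}(\Z)$ and the separated sum is controlled by $\|b\|_{\ell^{p}}^{2}=\|a\|_{\ell^{4-\varepsilon}}^{4}=\|f\|_{M_{2,4-\varepsilon}}^{4}$. In the near-diagonal regime, the pointwise bound $|uv|^{2}\leq|u|^{4}+|v|^{4}$ combined with Bernstein's inequality $\|e^{it\Delta}f_{\square_{k}}\|_{L^{\infty}_{x}}\lesssim\|f_{\square_{k}}\|_{L^{2}}$ (valid because $f_{\square_{k}}$ is frequency-localized to a unit cube) and $L^{2}$-conservation give $\|e^{it\Delta}f_{\square_{k}}\|_{L^{4}_{x}}^{4}\lesssim a_{k}^{4}$ for every $t$; integrating against the integrable weight $(1+|t|)^{-199}$ and summing over the $O(1)$ near-diagonal pairs yields a contribution $\lesssim\|a\|_{\ell^{4}}^{4}\leq\|a\|_{\ell^{4-\varepsilon}}^{4}$ by the $\ell^{p}$-embedding.

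The main obstacle is pushing the summability exponent strictly below the natural value $4$: it is precisely the $\ev{k_{1}-k_{2}}^{-1/2}$ decay from the bilinear Strichartz estimate that enables Young's inequality to supply this $\varepsilon$-refinement, and the whole argument turns on the observation that $\ev{\cdot}^{-1}\in\ell^{s}$ for any $s>1$. A secondary technical point is verifying that the anisotropic weight produced by the parabolic rescaling sums, after tiling $\R$, to an $L^{1}_{t}$ function; this is a routine direct computation.
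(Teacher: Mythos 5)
Your proof is correct and follows essentially the same route as the paper's: parabolically rescale the C\'{o}rdoba--Fefferman type estimate (Proposition \ref{prop:cordoba-fefferman type est}), tile space to obtain the global weighted square-function bound, expand and split the resulting double sum into a near-diagonal part (handled by Bernstein and $L^{2}$-conservation) and a separated part (handled by the bilinear estimate of Proposition \ref{thm:bilinear} followed by Young's convolution inequality). The only differences are cosmetic: the paper rescales $f$ itself and tiles by $\lam^{2}$-balls before undoing the scaling, and enforces separation via the arithmetic-progression decomposition $\sum_{k}=\sum_{0\le j\le 3}\sum_{k\in 4\Z+j}$ rather than a direct split on $|k_{1}-k_{2}|$, but the substance is identical.
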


\begin{proof}
    Rescaling $f$, we have the following.
    \begin{equation*}
        \|e^{it\Delta}f\|_{L^{p}_{t,x}(I\times\R^{d})} = \lam^{-\frac{d + 2}{p}}\|e^{it\Delta}g\|_{L^{p}_{t,x}(B_{\lam^{2}}(0)\times\R^{d})}
    \end{equation*}
    where $g(x) := \lam^{-d}f(\lam^{-1}x)$. We decompose $\R^d$ into finitely overlapping balls $B_{\lam^{2}}$ with radius $\lam^2$, that is, 
    \begin{equation*}
        \R^d = \bigcup B_{\lam^2}
    \end{equation*}
    Then we have
    \begin{equation*}
        \|e^{it\Delta}g\|_{L^{4}(B_{1}(0,\lam^{2})\times\R)}\leq (\sum_{B_{\lam^{2}}}\|e^{it\Delta}g\|^{4}_{L^{4}_{t,x}(B_{1}(0,\lambda^{2})\times B_{\lam^{2}})})^{\frac{1}{4}}. 
    \end{equation*}
    Applying the C\'{o}rdoba-Fefferman type inequality (\ref{eq:cordoba-fefferman}), we obtain the following:
    \begin{equation*}
        (\sum_{B_{\lam^{2}}}\|e^{it\Delta}g\|^{4}_{L^{4}_{t,x}(B_{1}(0,\lambda^{2})\times B{\lam^{2}}}))^{\frac{1}{4}} \lesssim (\sum_{B_{\lam^{2}}}\|(\sum_{k\in\mathbb{Z}}|e^{it\Delta}g_{\square_{k}}|^{2})^{\frac{1}{2}}\|^{4}_{L^{4}_{t,x}(w_{B_{\lam^{2}}})})^{\frac{1}{4}}. 
    \end{equation*}
    Since $B_{\lam^{2}}$ are finitely overlapping with each other and $\sum_{B_{\lam^{2}}}w_{B_{\lam^2}} \lesssim 1$, 
    \begin{equation*}
        \|e^{it\Delta}g\|_{L^{4}_{t,x}(B_{1}(0,\lam^{2})\times\R)} \lesssim \|(\sum_{k\in\mathbb{Z}}|e^{it\Delta}g_{\square_{k}})^{\frac{1}{2}}\|_{L^{4}_{t,x}(w_{B_{1}(0,\lam^{2})}\times\R)}
    \end{equation*}
    holds. Rescaling again, we have
    \begin{equation*}
        \|e^{it\Delta}f\|_{L^{4}_{t,x}(I\times\R)}\lesssim\|(\sum_{k\in\Z}|e^{it\Delta}\square_{k}f|^{2})^{\frac{1}{2}}\|_{L^{4}_{t,x}(w_{I}\times\R)}. 
    \end{equation*}
    Next, we divide the summation on the left-hand side of this inequality by $\sum_{k\in\Z} = \sum_{0\leq j\leq 3}\sum_{k \in 4\Z + j}$. Then we have 
    \begin{align*}
        &\|(\sum_{k\in\Z}|e^{it\Delta}\square_{k}f|^{2})^{\frac{1}{2}}\|_{L^{4}_{t,x}(w_{I}\times\R)}\\
        &= \|(\sum_{0\leq j\leq 3}\sum_{k \in 4\Z + j}|e^{it\Delta}\square_{k}f|^{2})^{\frac{1}{2}}\|_{L^{4}_{t,x}(w_{I}\times\R)}\\
        &\leq \sum_{0\leq j\leq 3}\|(\sum_{k \in 4\Z + j}|e^{it\Delta}\square_{k}f|^{2})^{\frac{1}{2}}\|_{L^{4}_{t,x}(w_{I}\times\R)}\\
        &= \sum_{0\leq j\leq 3}(\int_{\R^{2}}\sum_{k,k'\in 4\Z + j}|e^{it\Delta}\square_{k}f|^{2}|e^{it\Delta}\square_{k'}f|^{2}w_{I}(t)dtdx)^{\frac{1}{4}}\\
        &= \sum_{0\leq j\leq 3}(\sum_{k,k'\in 4\Z + j}\|[e^{it\Delta}\square_{k}f][e^{it\Delta}\square_{k'}f]\|^{2}_{L^{2}_{t,x}(w_{I}\times\R)})^{\frac{1}{4}}\\
        &\leq \sum_{0\leq j\leq 3}(\sum_{\substack{k,k'\in 4\Z + j \\ k = k'}}\|e^{it\Delta}\square_{k}f\|^{4}_{L^{4}_{t,x}(w_{I}\times\R)})^{\frac{1}{4}} + \sum_{0\leq j\leq 3}(\sum_{\substack{k,k'\in 4\Z + j \\ k \neq k'}}\|[e^{it\Delta}\square_{k}f][e^{it\Delta}\square_{k'}f]\|^{2}_{L^{2}_{t,x}(w_{I}\times\R)})^{\frac{1}{4}}\\
        &\lesssim \|f\|_{M_{4,4}(\R)} + \sum_{0\leq j\leq 3}(\sum_{\substack{k,k'\in 4\Z + j \\ k \neq k'}}\|[e^{it\Delta}\square_{k}f][e^{it\Delta}\square_{k'}f]\|^{2}_{L^{2}_{t,x}(w_{I}\times\R)})^{\frac{1}{4}}
    \end{align*}
    here we used Proposition \ref{prop:unimodular} in the last inequality. 
    Since $k, k' = 4n + j$ for $j = 0,1,2,3$ and $k\neq k'$, we have $|k - k'| > 3$. Thus, we can apply Proposition \ref{thm:bilinear} to the second term on the right-hand side. It holds that 
    \begin{align*}
        &\|(\sum_{k\in\Z}|e^{it\Delta}\square_{k}f|^{2})^{\frac{1}{2}}\|_{L^{4}_{t,x}(w_{I}\times\R)}\\
        &\lesssim \|f\|_{M_{4,4}(\R)} + \sum_{0\leq j\leq 3}(\sum_{\substack{k,k'\in 4\Z + j \\ k \neq k'}}\ev{k - k'}^{-1}\|\square_{k}f\|^{2}_{L^{2}_{x}(\R)}\|\square_{k'}f\|^{2}_{L^{2}_{x}(\R)})^{\frac{1}{4}}\\
        &\lesssim \|f\|_{M_{4,4}(\R)} + (\sum_{k,k'\in\Z}\ev{k - k'}^{-1}\|\square_{k}f\|^{2}_{L^{2}_{x}(\R)}\|\square_{k'}f\|^{2}_{L^{2}_{x}(\R)})^{\frac{1}{4}}. 
    \end{align*}
    Therefore, applying the Young inequality, for any $p\in [1, 2)$, 
    \begin{equation*}
        \|(\sum_{k\in\Z}|e^{it\Delta}\square_{k}f|^{2})^{\frac{1}{2}}\|_{L^{4}_{t,x}(w_{I}\times\R)} \lesssim \|f\|_{M_{4,4}(\R)} + \|f\|_{M_{2,2p}(\R)}\\
        \lesssim \|f\|_{M_{2,2p}(\R)}
    \end{equation*}
    holds. This shows (\ref{eq:presmoothing}). 
\end{proof}

\begin{proof}[Proof of Theorem \ref{thm:loocalsmoothing}]
    Let $\{\Delta_{j}\}_{j\geq 0}$ be the Littlewood-Paley decomposition. By (\ref{eq:presmoothing}), we have
    \begin{equation*}
        \|e^{it\Delta}\Delta_{j}f\|_{L^{4}_{t,x}(I\times\R)} \lesssim \|\Delta_{j}f\|_{M_{2,4-\varepsilon}}
    \end{equation*}
    for each $j\in\mathbb{N}$. Thus, from the H\"{o}lder inequality and Proposition \ref{thm:littlewoodpaley}, we obtain
    \begin{align*}
        \|e^{it\Delta}f\|_{L^{4}_{t,x}(I\times\R)}
        &\lesssim \sum_{j}\|e^{it\Delta}\Delta_{j}f\|_{L^{4}_{t,x}(I\times\R)}\\
        &\lesssim \sum_{j}\|\Delta_{j}f\|_{M_{2,q}(\R)}\\
        &\lesssim_{\varepsilon} (\sum_{j}2^{\varepsilon j}\|\Delta_{j}f\|^{q}_{M_{2,q}(\R)})^{\frac{1}{q}}\\
        &\sim \|f\|_{M^{\varepsilon}_{2, q}(\R)}
    \end{align*}
    for any $\varepsilon > 0$ and for any $q\in[1,4)$. This completes the proof. 
\end{proof}

\section{Proof of Theorem \ref{thm:randomized}}

To show our last result, we use the following probabilistic estimate.
\begin{lem}[Lemma 3.1 in {\cite{burq2008random}}]
    Let $\{g_{n}\}_{n \in \Z^d}$ be a sequence with the assumption (\ref{assumption:randomization}) Then, there exists $C>0$ such that 
    \begin{equation}
        \|\sum_{k\in\Z^d}g_{k}c_{k}\|_{L^{p}(\Omega)} \leq C\sqrt{p}\|c_{k}\|_{\ell^{2}_{k}}
    \end{equation}
    for any $p\geq 2$ and $\{c_{k}\}_{k\in\Z^d}\subset \ell^{2}(\Z^d)$. \label{lem:randomized}
\end{lem}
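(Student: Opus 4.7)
The plan is to establish this as a standard Khintchine-type inequality for subgaussian random variables, deriving an exponential moment bound first and then integrating the resulting Gaussian tail.

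First I would reduce to the real scalar case. Write $c_k = a_k + i b_k$ and $g_k = X_k + i Y_k$ with $X_k, Y_k$ independent, so that $\mathrm{Re}\sum_k g_k c_k = \sum_k (X_k a_k - Y_k b_k)$ and similarly for the imaginary part. By the triangle inequality in $L^p(\Omega)$ it suffices to bound sums of the form $S = \sum_k \xi_k d_k$, where $\{\xi_k\}$ are independent real random variables each satisfying $|\mathbb{E}[e^{\gamma \xi_k}]| \leq e^{c\gamma^2}$ for all $\gamma \in \R$ and $\{d_k\} \in \ell^2$ with $\|d_k\|_{\ell^2_k} \leq \|c_k\|_{\ell^2_k}$.

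Next I would compute exponential moments. By independence,
\begin{equation*}
\mathbb{E}\bigl[e^{\gamma S}\bigr] = \prod_k \mathbb{E}\bigl[e^{\gamma d_k \xi_k}\bigr] \leq \prod_k e^{c \gamma^2 d_k^2} = e^{c \gamma^2 \|d\|_{\ell^2}^2},
\end{equation*}
using assumption (\ref{assumption:randomization}) with parameter $\gamma d_k$ at each factor. Combined with the same estimate for $-S$, Markov's inequality gives
\begin{equation*}
P(|S| > \lambda) \leq 2 e^{-\gamma \lambda + c \gamma^2 \|d\|_{\ell^2}^2},
\end{equation*}
and optimizing in $\gamma$ (choosing $\gamma = \lambda/(2c \|d\|_{\ell^2}^2)$) yields the subgaussian tail bound $P(|S| > \lambda) \leq 2 e^{-\lambda^2/(4c \|d\|_{\ell^2}^2)}$.

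Finally I would convert the tail bound to an $L^p$ moment bound via the layer-cake formula. Writing
\begin{equation*}
\|S\|_{L^p(\Omega)}^p = \int_0^\infty p \lambda^{p-1} P(|S| > \lambda)\, d\lambda \leq 2p \int_0^\infty \lambda^{p-1} e^{-\lambda^2/(4c \|d\|_{\ell^2}^2)}\, d\lambda,
\end{equation*}
the change of variables $\lambda = \sqrt{4c}\,\|d\|_{\ell^2}\, u^{1/2}$ turns this into a gamma integral $p (4c)^{p/2} \|d\|_{\ell^2}^p\, \Gamma(p/2)$. Using $\Gamma(p/2)^{1/p} \lesssim \sqrt{p}$ (from Stirling) gives $\|S\|_{L^p(\Omega)} \leq C \sqrt{p}\, \|d\|_{\ell^2}$, which after recombining the real, imaginary, $a_k$ and $b_k$ pieces yields the claimed bound. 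I do not anticipate a real obstacle; the only care needed is tracking the complex decomposition so that the final $\ell^2$ norm is the genuine one of $\{c_k\}$ rather than of $\{a_k\}$ or $\{b_k\}$ separately.
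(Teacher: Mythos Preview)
The paper does not actually prove this lemma; it simply quotes it as Lemma~3.1 of Burq--Tzvetkov \cite{burq2008random} and uses it as a black box in the proof of Theorem~\ref{thm:randomized}. Your argument is the standard proof of this Khintchine-type inequality for subgaussian variables (exponential moment bound via independence and assumption~(\ref{assumption:randomization}), Chernoff tail bound, then layer-cake plus Stirling), and it is correct; in particular, since the $X_k$ and $Y_k$ are all mutually independent, the exponential moment of $\mathrm{Re}\sum_k g_k c_k = \sum_k (X_k a_k - Y_k b_k)$ directly picks up the factor $e^{c\gamma^2(\|a\|_{\ell^2}^2 + \|b\|_{\ell^2}^2)} = e^{c\gamma^2\|c\|_{\ell^2}^2}$, so the complex bookkeeping you flag causes no trouble.
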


\begin{proof}[Proof of Theorem \ref{thm:randomized}]
Let $(p,q)$ satisfy the assumption in Theorem \ref{thm:randomized}, and let $r > \mathrm{max}\{p,q\}$. By applying the Minkowski inequality and Lemma \ref{lem:randomized}, we have
\begin{align*}
    \|\|\sum_{k\in\Z^d}g_{k}(\omega)e^{it\Delta}\square_{k}f\|_{L^{p}_{t}L^{q}_{x}(\R^{d + 1})}\|_{L^{r}(\Omega)} 
    &\leq \|\|\sum_{k\in\Z^d}g_{k}(\omega)e^{it\Delta}\square_{k}f\|_{L^{r}(\Omega)}\|_{L^{p}_{t}L^{q}_{x}}\\
    &\leq C\sqrt{r}\|\|e^{it\Delta}\square_{k}f\|_{\ell^{2}_{k}}\|_{L^{p}_{t}L^{q}_{x}}. 
\end{align*}
    Next, to use the orthogonal Strichartz estimate, we divide $\|\square_{k}e^{it\Delta}f\|_{\ell^{2}_{k}}$ as follows: Let 
    \begin{equation*}
    S_{d} = \{(\sigma_{1},\sigma_{2},\cdots, \sigma_{d})\thinspace ;\thinspace \sigma_{i} = 0 \thinspace\mathrm{or}\thinspace1\thinspace(i = 1,2,\cdots,d)\}.
    \end{equation*}
    Then, we have
    \begin{align*}
        \|e^{it\Delta}\square_{k}f\|^{2}_{\ell^{2}_{k}} &= \sum_{k\in\Z^d}|e^{it\Delta}\square_{k}f|^{2}\\
        &= \sum_{s\in S_{d}}\sum_{n \in 2\Z^d + s}|e^{it\Delta}\square_{n}f|^{2}. 
    \end{align*}
    Note that $|S_{d}| = 2^{d}$ and the family $\{e^{it\Delta}\square_{n}f\}_{n\in 2\Z^d + s}$ is an orthogonal system in $L^{2}(\R^d)$ for each $s\in S_{d}$. Hence, from using (\ref{eq:ogs}), we obtain 
    \begin{align*}
        \|\|\sum_{k\in\Z^d}g_{k}(\omega)e^{it\Delta}\square_{k}f\|_{L^{p}_{t}L^{q}_{x}(\R^{d + 1})}\|_{L^{r}(\Omega)}
        &\lesssim \sqrt{r}\|\|e^{it\Delta}\square_{k}f\|_{\ell^{2}_{k}}\|_{L^{p}_{t}L^{q}_{x}}\\
        &\leq \sqrt{r}\sum_{s\in S_{d}} \|(\sum_{\substack{n = 2l + s\\ l\in\Z^d}}|e^{it\Delta}\square_{n}f|^{2})^{\frac{1}{2}}\|_{L^{p}_{t}L^{q}_{x}}\\
        &\lesssim \sqrt{r} \|\|\square_{k}f\|_{L^2}\|_{\ell^{2\alpha}_{k}}\\
        &\lesssim \sqrt{r}\|f\|_{M_{2,2\alpha}}. 
    \end{align*}
    where $\alpha = \frac{2q}{q + 2}$. Thus, by the Chebyshev inequality, it follows that there exists $C' > 0$ such that for any $r > \mathrm{max}\{p,q\}$, 
    \begin{equation}
        P(\|e^{it\Delta}f^{(\omega)}\|_{L^{p}_{t}L^{q}_x} > \lambda) \leq \left(\frac{C' r^{\frac{1}{2}}\|f\|_{M_{2,2\alpha}}}{\lambda}\right)^{r}. \label{eq:cheby}
    \end{equation}
    We set $r = (\frac{\lambda}{C'e\|f\|_{M_{2,2\alpha}}})^{2}$. If $r > \mathrm{max}\{p,q\}$, from the inequality (\ref{eq:cheby}), we obtain 
    \begin{equation*}
        P(\|e^{it\Delta}f^{(\omega)}\|_{L^p_{t}L^{q}_{x}} > \lambda) < e^{-c\lambda^{2}\|f\|^{-2}_{M_{2,2\alpha}}}. 
    \end{equation*}
    On the other hand, if $r \leq \mathrm{max}\{p,q\}$, we have the following trivial estimate
    \begin{align*}
        P(\|e^{it\Delta}f^{(\omega)}\|_{L^{p}_{t}L^{q}_{x}} > \lambda) 
        &\leq 1\\
        &= e^{\mathrm{max}\{p,q\}}e^{-\mathrm{max}\{p,q\}}\\
        &\leq e^{\mathrm{max}\{p,q\}}e^{-c\lambda^{2}\|f\|^{-2}_{M_{2,2\alpha}}}. 
    \end{align*}
    Therefore, the inequality
    \begin{equation*}
        P(\|e^{it\Delta}f^{(\omega)}\|_{L^{p}_{t}L^{q}_{x}} > \lambda) \leq c_{1}e^{-c_{2}\lambda^{2}\|f\|^{-2}_{M_{2,2\alpha}}}
    \end{equation*}
    holds. Finally, we take $\lambda = \frac{\|f\|_{M_{2,2\alpha}}}{\sqrt{c_{2}}}\left(\log(\frac{1}{\varepsilon}) + \log c_{1}\right)^{\frac{1}{2}}$. Then, the desired estimate (\ref{eq:randomrefinedstr}) holds with probability at least $1 - \varepsilon$. 
\end{proof}

\section{Proof of Proposition \ref{prop:necessary for Demeter}}
In this section, we will discuss the optimality for the reverse square function estimate for slabs. To show the sharpness, we use the following lemma:
\begin{lem}
    Let $s\geq 0$ and $p = \frac{2 (d + 2)}{d}$. Suppose that the reverse square function estimate
    \begin{equation}
        \|F\|_{L^{p}(\R^{d + 1})} \lesssim R^{s}\|(\sum_{k\in\Z^{d}}|F_{\widehat{\square}_{k}}|^{2})^{\frac{1}{2}}\|_{L^{p}(\R^{d + 1})}\label{asm:reverse}
    \end{equation}
    holds for all $R\geq 1$ and $F\in\mathcal{S}(\R^{d+1})$ with $\suppm \widehat{F}\subset\mathcal{N}_{R^{-2}}(\mathbb{P}^{d})$. Then, it follows that 
    \begin{equation*}
        \|e^{it\Delta}f\|_{L^{p}_{t,x}(\R^{d + 1})} \lesssim_{\varepsilon} \|f\|_{M^{s + \varepsilon}_{2,\frac{2(d + 2)}{d + 1}}(\R^{d})}
    \end{equation*}
    holds. \label{lem:reverse implies strichartz}
\end{lem}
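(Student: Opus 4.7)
The plan is to reduce to frequency-localized initial data via parabolic rescaling, convert the reverse square function hypothesis into a square-function estimate on unit-cube Fourier pieces of $f$, apply the orthogonal Strichartz estimate (\ref{eq:ogs}) to handle that square function, and finally reassemble over dyadic scales via Littlewood--Paley. Throughout, let $P_{k}f := \F^{-1}[\chi_{[k,k+1]^{d}}\widehat{f}]$ denote the sharp unit-cube frequency projection.

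First, I would fix a dyadic $N \geq 1$ and temporarily assume $\suppm\widehat{f}\subset\{|\xi|\sim N\}$. Setting $g(x):=N^{-d}f(x/N)$, one has $\widehat{g} = \widehat{f}(N\,\cdot)$ supported in $B_{d}(0,1)$, so that $F(t,x):=e^{it\Delta}g(x) = N^{-d}[e^{i(t/N^{2})\Delta}f](x/N)$ has space-time Fourier support on the compact paraboloid, hence inside $\mathcal{N}_{R^{-1}}(\mathbb{P}^{d})$ for $R=N$. An analogous identity holds for each piece $F_{\widehat{\square}_{k}}$, which after undoing the rescaling becomes $e^{it\Delta}(P_{k}f)$ up to the same Jacobian $N^{-d}$. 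Both sides of the hypothesized inequality scale by $N^{-d+(d+2)/p}$ under the substitution $(t,x)\mapsto(N^{-2}t,N^{-1}x)$, so applying the hypothesis with $R=N$ yields
\begin{equation*}
\|e^{it\Delta}f\|_{L^{p}_{t,x}(\R^{d+1})} \lesssim N^{s}\Big\|\Big(\sum_{k\in\Z^{d}}|e^{it\Delta}(P_{k}f)|^{2}\Big)^{1/2}\Big\|_{L^{p}_{t,x}(\R^{d+1})}.
\end{equation*}

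Next, since the $P_{k}f$ have pairwise disjoint Fourier supports, $\{e^{it\Delta}(P_{k}f)\}_{k\in\Z^{d}}$ is orthogonal in $L^{2}(\R^{d})$ at each time. For $p=q=\frac{2(d+2)}{d}$ the Strichartz scaling $\frac{1}{p}=\frac{d}{2}\bigl(\frac{1}{2}-\frac{1}{q}\bigr)$ holds and $q<\frac{2(d+1)}{d-1}$, so (\ref{eq:ogs}) applies with $2\beta=\frac{2q}{q+2}=\frac{2(d+2)}{d+1}$, giving
\begin{equation*}
\Big\|\Big(\sum_{k}|e^{it\Delta}(P_{k}f)|^{2}\Big)^{1/2}\Big\|_{L^{p}_{t,x}} \lesssim \big\|\|P_{k}f\|_{L^{2}_{x}}\big\|_{\ell^{\frac{2(d+2)}{d+1}}_{k}} \sim \|f\|_{M_{2,\frac{2(d+2)}{d+1}}},
\end{equation*}
the last equivalence being the standard fact that sharp and smooth unit-cube frequency decompositions yield equivalent modulation norms. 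Combined with the previous step, this gives $\|e^{it\Delta}f\|_{L^{p}_{t,x}} \lesssim N^{s}\|f\|_{M_{2,\frac{2(d+2)}{d+1}}}$ whenever $\widehat{f}$ is supported in $\{|\xi|\sim N\}$. For general $f$ I would write $f = \sum_{j\geq 0}\Delta_{j}f$, apply the previous estimate to each piece with $N=2^{j}$, and use the triangle inequality together with H\"older in $j$ (with exponents $\frac{2(d+2)}{d+1}$ and its conjugate):
\begin{equation*}
\|e^{it\Delta}f\|_{L^{p}_{t,x}} \leq \sum_{j}\|e^{it\Delta}\Delta_{j}f\|_{L^{p}_{t,x}} \lesssim \sum_{j}2^{js}\|\Delta_{j}f\|_{M_{2,\frac{2(d+2)}{d+1}}} \lesssim_{\varepsilon} \|f\|_{M^{s+\varepsilon}_{2,\frac{2(d+2)}{d+1}}},
\end{equation*}
where the last step uses Proposition \ref{thm:littlewoodpaley}.

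The main obstacle is the careful bookkeeping in the rescaling step, where one must verify that parabolic scaling produces the same $N$-dependent Jacobian on both sides of the hypothesized inequality and that sharp spatial cubes at scale $R^{-1}$ in the unit-ball problem correspond exactly to unit cubes in the original problem; an analogous care is needed at the end of the second step to pass between the sharp indicator $\chi_{[k,k+1]^{d}}$ and the smooth cutoff $\varphi(\,\cdot-k)$ used to define the modulation norm. Once these equivalences are in place, the remaining steps are standard applications of Littlewood--Paley theory and the orthonormal Strichartz estimate.
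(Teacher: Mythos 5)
Your proof is correct and lands on the same sequence of ideas as the paper (parabolic rescaling, reverse square function estimate, orthonormal Strichartz, Littlewood--Paley with H\"older in $j$), but it is organized differently and is noticeably cleaner. The paper first invokes Lemma \ref{lem:sufficientpart} to convert the global hypothesis into a local weighted estimate on balls $B_{R^2}$ with \emph{smooth} projections $\square_k$, then covers $\R^{d+1}$ by finitely overlapping balls $B_{\lambda^2}$, applies the local estimate on each, and recombines via $\sum w_{B_{\lambda^2}}\lesssim 1$; because the smooth projections overlap, it must then split $\Z^d$ into $2^d$ residue classes modulo $2$ to manufacture orthogonal families before using (\ref{eq:ogs}). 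You bypass all of this: since $\widehat{e^{it\Delta}g}$ is supported on the paraboloid, it lies inside $\mathcal{N}_{R^{-1}}(\mathbb{P}^{d})$ for \emph{every} $R\geq 1$, so the hypothesis can be applied directly with $R=N$, and the sharp cubes $P_kf$ are automatically pairwise orthogonal, so (\ref{eq:ogs}) applies without any residue class splitting; the final equivalence of sharp and smooth unit-cube decompositions of $M_{2,q}$ is then standard. What your route glosses over, and what Lemma \ref{lem:sufficientpart} is quietly designed to handle, is that the hypothesis is naturally stated for $F$ whose space--time Fourier transform is a function supported in a genuine slab, whereas $\widehat{e^{it\Delta}g}$ is a singular measure; the paper thickens the support by multiplying by $\etabr^{1/p}$ with compactly supported Fourier transform. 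A one-line mollification (multiply by $\eta(\epsilon t)$ with $\widehat\eta$ compactly supported, apply the hypothesis with fixed $R=N$ and $\epsilon\ll N^{-1}$, and pass to the limit) would make your version airtight without changing its structure. Apart from this technical point, everything in your write-up — the Jacobian bookkeeping, the identification of $2\beta=\frac{2(d+2)}{d+1}$, and the Littlewood--Paley recombination — matches the paper.
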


\begin{proof}
    Note that Lemma \ref{lem:sufficientpart} reveals that assumption (\ref{asm:reverse}) implies the inequality 
    \begin{equation}
        \|\E f\|_{L^{p}(B_{R^2})} \lesssim R^{s}\|(\sum_{k\in\Z^{d}}|\E f_{\square_{k}}|^{2})^{\frac{1}{2}}\|_{L^{p}(w_{B_{R^{2}}})}. \label{asm:reverse2}
    \end{equation}
    for all $\suppm\widehat{f}\subset B_{d}(0,R)$. 
    We first show the inequality
    \begin{equation}
        \|e^{it\Delta}f\|_{L^{p}_{t,x}(\R\times\R^d)} \lesssim_{d}\lam^{s}\|f\|_{M_{2, \frac{2(d + 2)}{d + 1}}(\R^d)}. \label{eq:lemglobalstrichartz}
    \end{equation}
    for all $\lam \geq 1$ and $\suppm\widehat{f}\subset B_{d}(0,\lam)$. 
    Rescaling $f$ so that $\mathrm{supp}\widehat{g}\subset B_{d}(0,1)$, we have
    \begin{equation*}
        \|e^{it\Delta}f\|_{L^{p}_{t,x}(\R\times\R^{d})} = \lam^{-\frac{d + 2}{p}}\|e^{it\Delta}g\|_{L^{p}_{t,x}(\R\times\R^{d})}
    \end{equation*}
    where $g(x) := \lam^{-d}f(\lam^{-1}x)$. We decompose $\R\times\R^d$ into finitely overlapping $(d + 1)$ dimensional balls $B_{\lam^2}$ with radius $\lam^2$, that is, 
    \begin{equation*}
        \R^{d + 1} = \bigcup B_{\lam^2}. 
    \end{equation*}
    Then, we can write 
    \begin{equation*}
        \|e^{it\Delta}g\|_{L^{p}_{t,x}(\R\times\R^d)} \lesssim \left(\sum_{B_{\lam^2}}\|e^{it\Delta}g\|^{p}_{L^{p}_{t,x}(B_{\lam^2})}\right)^{\frac{1}{p}}. 
    \end{equation*}
    Applying (\ref{asm:reverse2}), we obtain 
    \begin{equation*}
        \|e^{it\Delta}g\|_{L^{p}_{t,x}(B_{\lam^2})} \lesssim \lam^{s}\|(\sum_{k\in\Z^d}|e^{it\Delta}g_{\square_{k}}|^{2})^{\frac{1}{2}}\|_{L^{p}_{t,x}(w_{B_{\lam^2}})}. 
    \end{equation*}
    Thus, we have
    \begin{align*}
        \|e^{it\Delta}g\|_{L^{p}_{t,x}(\R\times\R^d} &\lesssim\lam^{s}\left(\bigcup_{B_{\lam^{2}}}\|(\sum_{k\in\Z^d}|e^{it\Delta}g_{\square_{k}}|^{2})^{\frac{1}{2}}\|^{p}_{L^{p}_{t,x}(w_{B_{\lam^{2}}})}\right)^{\frac{1}{p}}\\
        &\lesssim \lam^{s}\|(\sum_{k\in\Z^d}|e^{it\Delta}g_{\square_{k}}|^{2})^{\frac{1}{2}}\|_{L^{p}_{t,x}(\R\times\R^d)}. 
    \end{align*}
    By rescaling again, it follows that
    \begin{equation*}
        \|e^{it\Delta}f\|_{L^{p}_{t,x}(\R\times\R^d)} \lesssim \lam^{s}\|(\sum_{k\in\Z^d}|e^{it\Delta}\square_{k}f|^{2})^{\frac{1}{2}}\|_{L^{p}_{t,x}(\R\times\R^d)}. 
    \end{equation*}
    Set $S_{d}:= \{(\sigma_{1},\sigma_{2},\cdots,\sigma_{d})\in\Z^d\thinspace;\thinspace \sigma_{i} = 0\thinspace or\thinspace 1\thinspace for \thinspace each\thinspace i=1,2,\cdots,d\}$. Then, the sum $\sum_{k\in\Z^d}$ is divided as $\sum_{l\in S_{d}}\sum_{n\in 2\Z^d + l}$. Hence, we can apply the orthogonal Strichartz estimate (\ref{eq:ogs}):
    \begin{align*}
        \|(\sum_{k\in\Z^d}|e^{it\Delta}\square_{k}f|^{2})^{\frac{1}{2}}\|_{L^{p}_{t,x}(\R\times\R^d)} 
        &\leq \sum_{l\in S_{d}}\|(\sum_{n\in 2\Z^d + l}|e^{it\Delta}\square_{n}f|^{2})^{\frac{1}{2}}\|_{L^{p}_{t,x}(\R\times\R^d)}\\
        &\lesssim \sum_{l\in S_{d}}\|\|\square_{k}f\|_{L^{2}(\R^d)}\|_{\ell^{2\alpha}_{k\in\Z^{d}}}\\
        &\lesssim 2^{d}\|\|\square_{k}f\|_{L^{2}(\R^d)}\|_{\ell^{2\alpha}_{k\in\Z^{d}}}
    \end{align*}
    where $\alpha = \frac{d + 2}{d + 1}$. Therefore, we obtain
    \begin{equation*}
        \|e^{it\Delta}f\|_{L^{p}_{t,x}(\R\times\R^d)} \lesssim_{d}\lam^{s}\|f\|_{M_{2, 2\alpha}(\R^d)}. 
    \end{equation*}

    Let $\{\Delta_{j}\}_{j\in\mathbb{N}}$ be the Littlewood-Paley decomposition. By (\ref{eq:lemglobalstrichartz}) and the Littlewood-Paley characterization of modulation spaces, we have
    \begin{align*}
        \|e^{it\Delta}f\|_{L^{p}_{t,x}(\R\times\R^d)}\leq\sum_{j\in\mathbb{N}}\|e^{it\Delta}\Delta_{j}f\|_{L^{p}_{t,x}(\R\times\R^d)} &\lesssim \sum_{j\in\mathbb{N}}2^{js}\|\Delta_{j}f\|_{M_{2,2\alpha}}\\
        &\lesssim_{\varepsilon} (\sum_{j\in\mathbb{N}}2^{j(s + \varepsilon)}\|\Delta_{j}f\|^{2\alpha}_{M_{2,2\alpha}})^{\frac{1}{2\alpha}}\\
        &\sim \|f\|_{M^{s+\varepsilon}_{2,2\alpha}(\R^{d})}.
    \end{align*}
    This completes the proof. 
\end{proof}

Now, we are ready to prove Proposition \ref{prop:necessary for Demeter}. 
\begin{proof}[Proof of Proposition \ref{prop:necessary for Demeter}]
Suppose that the reverse square function estimate (\ref{eq:asmnecessary}) holds. Then, by Lemma \ref{lem:reverse implies strichartz}, we have
\begin{equation*}
    \|e^{it\Delta}f\|_{L^{p}_{t,x}(\R^{d+1})} \lesssim \|f\|_{M^{s+\varepsilon}_{2,\frac{2(d + 2)}{d + 1}}(\R^d)}. 
\end{equation*}
On the other hand, Proposition \ref{prop:schippanecessary} reveals that $s > \frac{d}{2} - \frac{d + 1}{p}$. This completes the proof. 
\end{proof}

\appendix
\section{A direct proof for the reverse square function estimate when $d = 1$}
For the reader's convenience, we present a direct proof of Proposition \ref{prop:cordoba-fefferman type est}. This proof is based on the same strategy as in Hickman and Vitturi's lecture notes. The similar idea can be found in Demeter's textbook \cite{demeter2020fourier}. 
 
Let $\etabr(t,x)\in\mathcal{S}(\R^2)$ satisfy 
\begin{align*}
\begin{cases} 
\suppm \F_{t,x}[\etabr^{\frac{1}{4}}] \subset B_{2}(0,R^{-2})\\
\etabr\gtrsim 1 \thinspace on \thinspace B_{2}(0,R^2) . 
\end{cases}
\end{align*} We calculate the left-hand side of (\ref{eq:cordoba-fefferman}) as
\begin{align*}
    \|\E f\|^{4}_{L^{4}_{t,x}}(\etabr) &= \int_{\R^{2}}|\sum_{k\in\Z}\E f_{\square_{k}}|^4 \etabr(t,x) dtdx\\
    &= \int_{\R^2}(|\sum_{k\in\Z}\E f_{\square_{k}}|^{2})^{2}\etabr(t,x)dtdx\\
    &= \int_{\R^2}(\sum_{k,\tilde{k}} \E f_{\square_{k}} \overline{\E f_{\square_{\tilde{k}}}})^{2}\etabr(t,x)dtdx\\
    &= \|\sum_{k,\tilde{k}} (\E f_{\square_{k}}\etabr^{\frac{1}{4}}) \overline{(\E f_{\square_{\tilde{k}}}\etabr^{\frac{1}{4}})}\|^{2}_{L^{2}_{t,x}(\R^2)}. 
\end{align*}
Since $\suppm \F_{t,x}[\etabr^{\frac{1}{4}}] \subset B_{2}(0, R^{-2})$, the space-time Fourier transform of $\E f_{\square_{k}}\etabr^{\frac{1}{4}}$ is supported on a $R^{-2}$-neighborhood of a paraboloid on a line segment with length $\lesssim R^{-1}$ i.e.
\begin{equation*}
\suppm \F_{t,x}[\E f_{\square_{k}}\etabr^{\frac{1}{4}}]\subset \{(\xi_{1}, \xi_{2})\thinspace ; \thinspace R^{-1}(k-1) \lesssim \xi_{1} \lesssim R^{-1}(k + 1),\thinspace  |\xi_{1}|^{2} - R^{-2} \leq \xi_{2} \leq  |\xi_{1}|^{2} + R^{-2}\}. 
\end{equation*}
Let $\tsquare_{k}\subset \R^{2}$ denote the support of the space-time Fourier transform of $\E f_{\square_{k}}\etabr^{\frac{1}{4}}$. Applying the Cauchy-Schwarz inequality, we obtain the following.
\begin{align*}
    |\sum_{\mathrm{dist}(\tsquare_{k}, \tsquare_{\tilde{k}}) \lesssim R^{-1}}\E f_{\square_{k}}\etabr^{\frac{1}{4}} \overline{\E f_{\square_{\tilde{k}}}\etabr^{\frac{1}{4}}}| &\lesssim (\sum_{k}|\E f_{\square_{k}}\etabr^{\frac{1}{4}}|^{2})^{\frac{1}{2}}(\sum_{k}|\sum_{\substack{\tilde{k}, \\ \mathrm{dist}(\tsquare_{k}, \tsquare_{\tilde{k}}) \lesssim R^{-1}}}\E f_{\square_{\tilde{k}}}\etabr^{\frac{1}{4}}|^{2})^{\frac{1}{2}}\\
    &\lesssim (\sum_{k}|\E f_{\square_{k}}\etabr^{\frac{1}{4}}|^{2})^{\frac{1}{2}}. 
\end{align*}
Thus, it is enough to consider the case where $\mathrm{dist}(\tsquare_{k}, \tsquare_{\tilde{k}}) \gtrsim R^{-1}$ and show
\begin{equation*}
    \|\sum_{\mathrm{dist}(\tsquare_{k}, \tsquare_{\tilde{k}}) \gtrsim R^{-1}}\E f_{\square_{k}}\etabr^{\frac{1}{4}} \overline{\E f_{\square_{\tilde{k}}}\etabr^{\frac{1}{4}}}\|^{2}_{L^{2}_{t,x}(\R^2)} \lesssim \|(\sum_{k}|\E f_{\square_{k}}|^{2})^{\frac{1}{2}}\|^{4}_{L^{4}_{t,x}(\etabr)}. 
\end{equation*}
Observe 
\begin{equation*}
    \suppm \F_{t,x}[\E f_{\square_{k}}\etabr^{\frac{1}{4}}] * \F_{t,x}[\overline{\E f_{\square_{\tilde{k}}}\etabr^{\frac{1}{4}}}] \subset \tsquare_{k} - \tsquare_{\tilde{k}}. 
\end{equation*}
Suppose that 
\begin{equation*}
    \tsquare_{k} - \tsquare_{\tilde{k}} \cap \tsquare_{j} - \tsquare_{\tilde{j}} \neq \emptyset. 
\end{equation*}
Then there exists $y_{l} = (y^{(1)}_{l}, y^{(2)}_{l}) \in \tsquare_{l}\subset\R^{2}\quad (l = k,\tilde{k},j,\tilde{j})$ such that
\begin{equation*}
    y_{k} - y_{\tilde{k}} = y_{j} - y_{\tilde{j}}. 
\end{equation*}
Since each slabs $\tsquare_{l}$ belongs to the $R^{-2}$-neighbourhood of the paraboloid, that is, 
\begin{equation*}
\tsquare_{l} \subset \{(\xi_{1}, \xi_{2})\thinspace ; \thinspace -1 -R^{-1} \leq \xi_{1} \leq 1 + R^{-1},\thinspace |\xi_{1}|^{2} - R^{-2} \leq \xi_{2} \leq  |\xi_{1}|^{2} + R^{-2}\}, 
\end{equation*}
for $l = k,\tilde{k}, j,\tilde{j}$ there exists $t_{l} \in [-1 - R^{-2}, 1 + R^{-2}]$ such that
\begin{equation*}
    y^{(1)}_{l} = t_{l}
\end{equation*}
and 
\begin{equation*}
    |y^{(2)}_{l} - t^{2}_{l}|\lesssim R^{-2}. 
\end{equation*}
Using these relations, we have the following.
\begin{align*}
    |(t^{2}_{k} - t^{2}_{\tilde{k}}) - (t^{2}_{j} - t^{2}_{\tilde{j}})| &\leq |(t^{2}_{k} - t^{2}_{\tilde{k}}) - (y^{(2)}_{k} - y^{(2)}_{\tilde{k}})| + |(t^{2}_{j} - t^{2}_{\tilde{j}}) - (y^{(2)}_{j} - y^{(2)}_{\tilde{j}})| \\
    &\lesssim R^{-2}
\end{align*}
and it follows that
\begin{align*}
    |t_{k} - t_{\tilde{k}}||(t_{k} + t_{\tilde{k}}) - (t_{j} + t_{\tilde{j}})|
    &= |(t^{2}_{k} - t^{2}_{\tilde{k}}) - (t^{2}_{j} - t^{2}_{\tilde{j}})|\\
    &\lesssim R^{-2}. 
\end{align*}
Since we assumed $\mathrm{dist}(\tsquare_{k},\tsquare_{\tilde{k}}) \gtrsim R^{-1}$, we have $|t_{k} - t_{\tilde{k}}| \gtrsim R^{-1}$. Thus, combining this and the above inequality, we get
\begin{equation*}
    |(t_{k} + t_{\tilde{k}}) - (t_{j} + t_{\tilde{j}})| = |(t_{k} - t_{j}) + (t_{\tilde{k}} - t_{\tilde{j}})| \lesssim R^{-1}. 
\end{equation*}
Note that $|(t_{k} - t_{j}) - (t_{\tilde{k}} - t_{\tilde{j}})| = |(t_{k} - t_{\tilde{k}}) - (t_{j} - t_{\tilde{j}})| =0$. Thus, by combining them and applying the triangle inequality, we have
\begin{equation*}
    |t_{k} - t_{j}|, \thinspace |t_{\tilde{k}} - t_{\tilde{j}}| \lesssim R^{-1}. 
\end{equation*}
Therefore, from these inequalities and $|t_{l}| \leq 1 + R^{-2} \leq 2$ for $l = k,\tilde{k}, j, \tilde{j}$, we obtain the following:
\begin{align*}
    |y_{k} - y_{j}| &\leq |y_{k} - (t_{k}, t^{2}_{k})| + |(t_{k}, t^{2}_{k}) - (t^{2}_{j},t^{2}_{j})| + |(t_{j}, t^{2}_{j}) - y_{j}|\\
    &\lesssim R^{-2} + R^{-1} + R^{-2} \lesssim R^{-1}, 
\end{align*}
and
\begin{equation*}
    |y_{\tilde{k}} - y_{\tilde{j}}| \lesssim R^{-1}.
\end{equation*}
Hence, given $\tsquare_{k}, \tsquare_{\tilde{k}}$, there are at most $O(1)$ choices of $\tsquare_{j}, \tsquare_{\tilde{j}}$ for which $\tsquare_{k} - \tsquare_{\tilde{k}} \cap \tsquare_{j} - \tsquare_{\tilde{j}} \neq \emptyset$. Consequently, we have \begin{align*}
    &\|\sum_{\mathrm{dist}(\tsquare_{k}, \tsquare_{\tilde{k}}) \gtrsim R^{-1}}\E f_{\square_{k}}\etabr^{\frac{1}{4}} \overline{\E f_{\square_{\tilde{k}}}\etabr^{\frac{1}{4}}}\|^{2}_{L^{2}_{t,x}(\R^2)}\\
    &= \int_{\R^2}(\sum_{\mathrm{dist}(\tsquare_{k}, \tsquare_{\tilde{k}}) \gtrsim R^{-1}}\E f_{\square_{k}}\etabr^{\frac{1}{4}} \overline{\E f_{\square_{\tilde{k}}}\etabr^{\frac{1}{4}}})^{2}dtdx\\
    &= \int_{\R^{2}}\sum_{(k,\tilde{k}), (j,\tilde{j})}(\E f_{\square_{k}}\etabr^{\frac{1}{4}} \overline{\E f_{\square_{\tilde{k}}}\etabr^{\frac{1}{4}}}) \cdot (\overline{\E f_{\square_{j}}\etabr^{\frac{1}{4}} \overline{\E f_{\square_{\tilde{j}}}\etabr^{\frac{1}{4}}}})dtdx\\
    &\lesssim \sum_{\mathrm{dist}(\tsquare_{k}, \tsquare_{\tilde{k}}) \gtrsim R^{-1}}\int_{\R^{2}} (\E f_{\square_{k}}\etabr^{\frac{1}{4}} \overline{\E f_{\square_{\tilde{k}}}\etabr^{\frac{1}{4}}})^{2} dtdx\\
    &\lesssim \int_{\R^2}(\sum_{k}|\E f_{\square_{k}}\etabr^{\frac{1}{4}}|^{2})^{2}dtdx \\
    &= \|(\sum_{k}|\E f_{\square_{k}}|^{2})^{\frac{1}{2}}\|^{4}_{L^{4}_{t,x}(\etabr)}. 
\end{align*}
Noting $\etabr \lesssim w_{B_{R^2}}$, we obtain the result.

\section*{Acknowledgment}
The author thanks Professor Mitsuru Sugimoto for many comments. He also thanks Dr. Naoto Shida for helpful suggestions on Lemma \ref{lem:sufficientpart}. This work was supported by Grant-in-Aid for JSPS Fellows No. 24KJ1228.

\bigskip

Graduate School of Mathematics, Nagoya University, Furocho, Chikusaku, Nagoya 464-8602, Japan

\textit{Email}: inami.kotaro.u2@s.mail.nagoya-u.ac.jp

\end{document}